\newcolumntype{L}{>{$}l<{$}}
\numberwithin{equation}{section}
\renewcommand{\thetheoremName}
\newcommand{\IC}{\mathbb{C}}
\newcommand{\IT}{\mathbb{T}}
\newcommand{\IN}{\mathbb{N}}
\newcommand{\IG}{\mathbb{G}}
\newcommand{\IP}{\mathbb{P}}
\newcommand{\IQ}{\mathbb{Q}}
\newcommand{\IR}{\mathbb{R}}
\newcommand{\IZ}{\mathbb{Z}}
\newcommand{\calC}{\mathcal{C}}
\newcommand{\calH}{\mathcal{H}}
\newcommand{\calN}{\mathcal{N}}
\newcommand{\calO}{\mathcal{O}}
\newcommand{\calT}{\mathcal{T}}
\newcommand{\ib}{\mathfrak{b}}
\newcommand{\il}{\mathfrak{l}}
\newcommand{\ip}{\mathfrak{p}}
\newcommand{\ig}{\mathfrak{g}}
\def\Ind{\mathrm{Ind}}
\def\PGL{\mathrm{PGL}}
\def\End{\mathrm{End}}
\def\GL{\mathrm{GL}}
\def\Gal{\mathrm{Gal}}
\def\SL{\mathrm{SL}}
\def\Frob{\mathrm{Frob}}
\def\Spec{\mathrm{Spec}}
\def\Spf{\mathrm{Spf}}
\def\Id{\mathrm{Id}}
\def\image{\mathrm{im}}
\def\Sym{\mathrm{Sym}}
\DeclareMathOperator\ord{ord}
\DeclareMathOperator\ch{ch}
\newtheorem{theorem}{Theorem}[section]
\newtheorem*{theorem*}{Theorem}
\newtheorem{question}[theorem]{Question}
\newtheorem{lemma}[theorem]{Lemma}
\newtheorem{prop}[theorem]{Proposition}
\newtheorem{cor}[theorem]{Corollary}
\newtheorem{conj}{Conjecture}
\theoremstyle{definition}
\theoremstyle{remark}
\newtheorem{remark}[theorem]{Remark}
\newtheorem{strategy}[theorem]{Strategy}
\begin{document}

\title{A finiteness result for $p$-adic families of Bianchi modular forms}
\author{Vlad Serban}

\thanks{The author is supported by START-prize Y-966 of the Austrian Science Fund (FWF) under P.I. Harald Grobner}

\address{Vlad Serban, Department of Mathematics, EPFL, Station 8, CH-1015 Lausanne, Switzerland}
\email{vlad.serban@epfl.ch}

\subjclass[2010]{11F41, 11F33, 11F75, 11F80.}
\begin{abstract}
We study $p$-adic families of cohomological automorphic forms for $\GL(2)$ over imaginary quadratic fields and prove that families interpolating a Zariski-dense set of classical cuspidal automorphic forms only occur under very restrictive conditions. We show how to computationally determine when this is not the case and establish concrete examples of families interpolating only finitely many Bianchi modular forms. 
\end{abstract}
\keywords{Density of classical points, rigidity, Bianchi modular forms, Hida families}

\bibliographystyle{abbrv}
\maketitle

\section{Introduction}\label{sec:intro}
 The theory of $p$-adic variation has played a central role in much of the progress in the Langlands programme during the last decades, pioneered by the work of H. Hida (e.g., \cite{MR848685, MR868300}) on $p$-adic families of ordinary cuspidal modular forms. Hida subsequently generalized the constructions to automorphic forms on $\GL(2)$ over arbitrary number fields in a series of papers \cite{MR960949, MR1203228, MR1313784}. Over arbitrary number fields $F$, one has to deal with the difficulty that, as soon as $F$ is not totally real, the central objects of Hida's construction, namely the universal ordinary $p$-adic Hecke algebras $\IT$ which interpolate by weight Hecke eigensystems coming from classical automorphic forms, are no longer torsion--free over the respective $p$-adic weights. The lack of suitable Shimura varieties also complicates many constructions. 
 \par 
 In this paper, we consider the situation when $F$ is imaginary quadratic, which was first studied in R. Taylor's thesis \cite{MR2636500}. We examine when a $p$-adic family can interpolate a Zariski-dense set of classical cuspidal automorphic forms, as is the case over totally real fields. This question was already addressed in a paper by F. Calegari and B. Mazur \cite{MR2461903}, which mainly focused on the representation-theoretic side. Provided a $p$-adic transcendence-theoretic conjecture holds, their results imply the set of classical deformations can only be dense in some very specific instances. Calegari and Mazur also gave an illustrative example of a family interpolating finitely many classical automorphic forms \cite[Theorem 1.1]{MR2461903}; however the proof supplied contained an omission. In this paper we rectify this and provide further such examples. This omission was also discussed in D. Loeffler's article \cite{Loeffler:2010ab}, together with an exposition of the analogous question of density of classical points on eigenvarieties for $\GL(1)$ over arbitrary number fields. The existence of ordinary $p$-adic families without a Zariski--dense set of classical points was also studied by A. Ash, D. Pollack and G. Stevens \cite{MR2396124} for $\GL(n)/\IQ$. When $n=3$ they are able to exhibit \cite[Theorem 9.4]{MR2396124} such families. 
 Our work thus fits into the larger framework of examining density of classical points on eigenvarieties and deformation spaces (see e.g., \cite{MR3235551,MR3542488}), and of gaining a better understanding of the situations when these are not expected to be dense.
 \par
The question amounts to understanding the support of the relevant Hecke algebra $\IT$ as a module over the ring of $p$-adic weights $\Lambda$. Over arbitrary number fields, Hida has conjectured \cite[Conjecture 4.3]{MR1313784} that the support has codimension the number of pairs of complex embeddings $r_2$ of $F$, and Hida indeed proves that when $F$ is imaginary quadratic the support has pure codimension one \cite[Theorem 6.2]{MR1313784}. Up to connected components, $\Lambda\cong\calO[[X,Y]]$ is a ring of formal power series over the valuation ring $\calO$ of a finite extension  of $\IQ_p$ and the points on $\Spec(\Lambda)$ that can give rise to Hecke eigensystems of classical automorphic forms constitute a discrete set which is Zariski-dense in the weight space. Points giving rise to such eigensystems are called \emph{classical}. Explicitly, up to some normalization, the primes of $\Lambda$ in question are given by
$$P_{k,\zeta,\xi}:=(X+1-(1+p)^k\zeta,Y+1-(1+p)^k\xi)\cdot\Lambda$$
for any $p$-power roots of unity $\zeta,\xi$ and any positive integer $k$, whereas there is no explicit description of the support of $\IT$ in general. Nevertheless, it turns out that the algebraic situation is quite rigid: building on a $p$-adic version in \cite{Serban:2016aa} of an arithmetic geometry result concerning the density of torsion points on subvarieties of algebraic tori, the so-called multiplicative Manin--Mumford Conjecture, we show that components passing through a dense set of points $P_{k,\zeta,\xi}$ are forced to have a special form and prove the following:
\begin{theorem}\label{thm:support}
Let $F$ be an imaginary quadratic field of class number one with ring of integers $\calO_F$ and let $p$ denote a prime split in $\calO_F$. Let $\varphi$ be a cuspidal automorphic form on $\GL(2)$ over $F$, ordinary at $p$, that contributes to the cohomology of some congruence subgroup $\Gamma\subseteq \SL_2(\calO_F)$. Let $\calH$ denote the ordinary $p$-adic family of cusp forms containing $\varphi$. Then the component of the support of $\calH$ in $\Lambda$ passing through $\varphi$ either:
\begin{enumerate}
\item contains all parallel weights or 
\item contains only finitely many points that give rise to classical automorphic forms or 
\item contains only finitely many points $P_{k,\zeta,\xi}$ of weight $k$ different from the weight $k_\varphi$ of $\varphi$; however
there must exist a positive dimensional formal subtorus $\calT$ of $(\widehat{\IG}_m)^2_{/\IZ_p}$ such that every pair of roots of unity $(\zeta,\xi)\in\mu_{p^\infty}^2$ with $(\zeta-1,\xi-1)\in \calT(\overline{\IQ}_p)$ gives rise to classical points $P_{k_\varphi,\zeta,\xi}$ on the component. In particular, the component contains an infinite number of classical points of fixed weight $k_\varphi$ and varying non-parallel $p$-power nebentypus characters.
\end{enumerate}
\end{theorem}
We remark that the assumption on the class number is not necessary, but to simplify the paper we have not included a detailed proof for arbitrary imaginary quadratic fields and therefore add that assumption. We have however included results that make it clear how our strategy goes through without difficulty for arbitrary imaginary quadratic fields $F$, so that Theorem \ref{thm:support} holds in greater generality, see also Remark \ref{rem:generalh}.
These results are established in Section \ref{sec:two}, while Section \ref{sec:one} provides the necessary background. \par
The first possibility of Theorem \ref{thm:support} indeed occurs, for instance for Hida families which are base-changed from $\IQ$. Still, in the absence of a family of classical points coming from functoriality constructions, one hopes to prove that only finitely many classical points lie on a given family. Our results imply this can be verified, whenever true, with a finite amount of computation. 
In Section \ref{sec:computations} we provide a method for doing so based on computationally eliminating the first and third possibilities in Theorem \ref{thm:support}. We then concretely carry out such computations, which required writing new code to compute with Bianchi modular forms with nebentypus. To that end we also prove a dimension formula for spaces of Eisenstein series with nebentypus (Proposition \ref{prop:eisold}). This approach allows us to come up with the first proven examples of families with finitely many classical points. We show the following (see Theorem \ref{theorem:maincomp}), which in particular closes the gap in the proof of \cite[Theorem 1.1]{MR2461903} and implies a negative answer to a question raised in Taylor's thesis \cite[Remark p.124]{MR2636500}:
\begin{theorem}\label{thm:examples}
Let $F=\IQ(\sqrt{-2})$ and $p=3$. The Hida families passing through the $p$-ordinary cuspidal Bianchi modular forms of levels $\Gamma_0(7+\sqrt{-2})$, $\Gamma_0(9+3\sqrt{-2})$ and $\Gamma_0(4+7\sqrt{-2})$ contain only finitely many classical automorphic forms. 
\end{theorem}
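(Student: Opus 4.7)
The plan is to apply Theorem \ref{thm:support} to each of the three listed Bianchi forms $\varphi$ and to computationally eliminate cases (1) and (3), leaving only case (2)---finiteness of classical specializations---as the remaining possibility.

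First, to rule out case (1), I would use the new code for Bianchi modular forms with nebentype to compute the space of ordinary cuspidal Bianchi forms of a well-chosen parallel weight $k'\neq k_\varphi$, with the appropriate $3$-power-enhanced level structure. A family through $\varphi$ that hits every parallel weight would specialize at $P_{k',1,1}$ to a classical ordinary eigenform congruent to $\varphi$ modulo the maximal ideal of $\IT_{\mathfrak{m}}$. Comparing $T_\mathfrak{q}$-eigenvalues at a handful of small unramified primes $\mathfrak{q}$ of good reduction should be enough to show that no such form exists, closing off case (1).

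Second, to rule out case (3), I would exploit the strengthening (Theorem \ref{thm:mainalgebraic}) that an infinite collection of classical points at weight $k_\varphi$ with non-parallel $3$-power nebentype would force the existence of a positive-dimensional formal subtorus $\calT\subset(\widehat{\IG}_m)^2_{/\IZ_p}$ all of whose torsion points yield classical specializations on the component. I would then compute the ordinary cuspidal Bianchi spaces at weight $k_\varphi$ for a few small non-trivial $3$-power nebentype characters, using Proposition \ref{prop:eisold} to isolate the cuspidal subspace, and verify that the Hecke eigensystem of $\varphi$ does not reappear after twisting. Even one or two well-chosen twists suffice to contradict the density of torsion points on a positive-dimensional $\calT$, eliminating case (3).

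The main obstacle is the effective computation itself: producing a working implementation for Bianchi modular forms with nebentype, and carrying out the calculations at the specific levels $\Gamma_0(7+\sqrt{-2})$, $\Gamma_0(9-3\sqrt{-2})$, $\Gamma_0(4+7\sqrt{-2})$. These levels are presumably chosen so that the relevant cohomology groups remain tractable while the associated Hida families are genuinely nontrivial (in particular, not obviously arising from functoriality constructions such as base change from $\IQ$). The Eisenstein dimension formula of Proposition \ref{prop:eisold} is essential for separating cuspidal and Eisenstein contributions, and together with the new code it reduces Theorem \ref{thm:examples} to a finite---but delicate---computer calculation.
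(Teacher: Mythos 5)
Your proposal captures the paper's broad strategy correctly: invoke Theorem \ref{thm:support}/\ref{thm:mainalgebraic}, eliminate case (1) by a computation in a second parallel weight, and eliminate case (3) by a nebentype computation, relying on Proposition \ref{prop:eisold} and new code. The treatment of case (1) is essentially right (though the paper actually checks the stronger fact that the relevant cuspidal cohomology is \emph{zero} in weight $(4,4)$, cf.\ \cite[Lemma 8.8]{MR2461903}, rather than merely that no congruent eigenform exists).

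The gap is in the elimination of case (3). You write that ``one or two well-chosen twists suffice to contradict the density of torsion points on a positive-dimensional $\calT$,'' but the hypothetical subtorus $\calT$ comes with an unknown slope $N\in\IZ_p$, so there is no way to ``well-choose'' a couple of nebentypes in advance: different $N$ produce entirely different sets of torsion pairs $(\zeta,\zeta^N)$, and a spot-check at one or two characters can at best eliminate a residue class of $N$, not all of $\IZ_p$ (nor the swapped tori $X+1=(Y+1)^N$). What makes the verification finite is the content of the paper's Corollary \ref{cor:checktwists}: \emph{any} such $\calT$ forces the family to interpolate ordinary cusp forms that are new at level dividing $\Gamma_0(N)\cap\Gamma_1(p^{i+1})$ for \emph{every} $i\geq 1$. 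Hence one checks, for $i=1$, that no ordinary newform on the component appears among \emph{all} newform spaces at levels between $\Gamma_0(N)\cap\Gamma_1(3)$ and $\Gamma_0(N)\cap\Gamma_1(9)$ with \emph{all} nebentypes compatible with ordinariness (which are sharply constrained by Lemma \ref{lemma:ordinary}, a result you do not invoke but which is needed to cut the search down to the boldfaced entries in the paper's tables). Your phrase ``verify that the Hecke eigensystem of $\varphi$ does not reappear after twisting'' is also misleading: the eigenforms one is searching for are not character twists of $\varphi$ but distinct cuspidal newforms whose Hecke eigenvalues are \emph{congruent} to those of $\varphi$ modulo the maximal ideal of the Hecke algebra. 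Without Corollary \ref{cor:checktwists} and Lemma \ref{lemma:ordinary}, your plan as stated does not reduce the problem to a finite computation, which is exactly the point the paper needs to make rigorous.
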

This result also addresses the questions raised in \cite[Section 4]{Loeffler:2010ab}. The computations are not limited to the examples provided and one could use our results and code to produce further examples along the lines of Theorem \ref{thm:examples} for other imaginary quadratic fields and primes $p$, as well as higher tame level. We note however that the necessary computations of spaces of Bianchi modular forms with nebentypus and the computation of Hecke eigenvalues quickly become computationally expensive as the level increases. \\
Finally, we remark that it would be interesting to examine if the third possibility in Theorem \ref{thm:support} can be removed, which would make it much easier to prove that any given family contains finitely many classical automorphic forms. It would also be interesting to examine what can be established along the lines of Theorem \ref{thm:support} for arbitrary number fields, though the analogous statement on the codimension of the support of $\IT$ is only conjectural in general. 

 \section{Setup}\label{sec:one}
 Let $F$ be a number field which, unless otherwise specified, is imaginary quadratic, and let $\calO_F$ denote its ring of integers. Our main focus will be on automorphic forms on $\GL(2)/F$ contributing to the cohomology of some congruence subgroup $\Gamma\subset \SL_2(\calO_F)$ and we assume $\Gamma$ is torsion--free. The relevant locally symmetric spaces in this case are quotients $\Gamma\backslash\mathbb{H}_3$ of the hyperbolic $3$-space, which can be viewed as a subset of the Hamilton quaternions $\mathbb{H}$. Writing $z\in\mathbb{H}$ as $z=a+bi+cj+dk$ for $a,b,c,d\in\IR$, we have
$$\mathbb{H}_3=\{z=a+bi+cj\vert a,b\in\IR,  c\in\IR^{>0}\}\subset \mathbb{H}.$$ 
The action of $\Gamma$ can then be written succintly with multiplication and inversion in $\mathbb{H}$ as
$$\begin{pmatrix}
a&b\\
c&d\\
\end{pmatrix}
\cdot z =(az+b)(cz+d)^{-1}.
$$

 Let $p$ denote a rational prime, we fix algebraic closures $\overline{\IQ}$ and $\overline{\IQ}_p$ together with an embedding $i:\overline{\IQ}\hookrightarrow\overline{\IQ}_p$ and denote by $E$ a finite extension of $\IQ_p$ with $p$-adic ring of integers $\calO$. For an $\calO_F$-module $A$, we consider for positive integers $k,k'$ the irreducible representations
$$V_{k,k'}(A)=\Sym_k(A^2)\otimes \overline{\Sym_{k'}}(A^2)$$
of $\SL_2(A)$ obtained by taking symmetric powers of the standard representation, with the action on the second factor twisted by complex conjugation. When $A=\IC$ or $E$, these exhaust the isomorphism classes of finite dimensional irreducible representations, as a real Lie group or as $E$-rational representations, respectively. We write $Y(\Gamma)$ for the Borel-Serre compactification of the associated locally symmetric space and $\underline{M}$ for the local system on $Y(\Gamma)$ associated to a $\Gamma$-module $M$. We also denote by $H^i_{c} (Y(\Gamma), \underline{M})$ the cohomology with compact support, and consider the resulting boundary long exact sequence:
$$\xymatrix{\ar[r]&H^{i-1}(\partial Y(\Gamma), \underline{M})\ar[r]&H^i_{c} (Y(\Gamma), \underline{M})\ar[r]&H^i (Y(\Gamma), \underline{M})\ar[r]&H^i (\partial Y(\Gamma), \underline{M})\ar[r]&}$$
Following Hida, we denote by $H^1_{P} (Y(\Gamma), \underline{M})$ the image of $H^1_{c} (Y(\Gamma), \underline{M})$ in $H^1(Y(\Gamma), \underline{M})$. Under our assumptions, $Y(\Gamma)=\overline{\Gamma\backslash\mathbb{H}_3}$ is a classifying space for $\Gamma$ and we have that
  $$H^1(Y(\Gamma), \underline{V_{k,k'}(\IC)})\cong H^1(\Gamma,V_{k,k'}(\IC)).$$
 By the Eichler-Shimura isomorphism, the cohomology $H^1_{P}(Y(\Gamma), \underline{V_{k,k'}(\IC)})$ can be computed via the $(\ig,K)$-cohomology of cuspidal automorphic representations $\Pi=\Pi_\infty\otimes\Pi_f$. Moreover, the $(\ig,K)$-cohomology vanishes unless the archimedean component $\Pi_\infty$ contributes to cohomology with coefficients in a unitary representation of $\SL_2(\IC)$ at each archimedean place. This forces the coefficients to be conjugate self-dual, so that $H^1_{P}(Y(\Gamma), \underline{V_{k,k'}(\IC)})=0$ vanishes unless $k=k'$. The space of weight $k$ cusp forms, which we will be mostly concerned with, is then isomorphic to $H^1_{P}(Y(\Gamma), \underline{V_{k,k}(\IC)})$. See \cite[Section 3.6]{MR892187} for details.\\
 Via the embedding $i$, we may also consider cohomology with coefficients in the $p$-adic representations  $V_{k,k'}(E/O)$, which serve as an avatar for spaces of $p$-adic automorphic forms. The standard Hecke operators act on these spaces, and we use the subscript $H^i_{\ord}$ for the \emph{ordinary} part of the cohomology, namely the direct summand of the $p$-adic cohomology that $U_p$ acts faithfully on. 
 \begin{remark}\label{rem:simplify}
 \begin{enumerate}
 \item
 To simplify exposition and exhibit the analogy with modular forms, we will limit some statements to automorphic forms for congruence subgroups of $\SL_2(\calO_F)$ and sometimes assume $F$ has class number one. Everything goes through without the latter assumption, whereas the whole theory of ordinary families is carried out for quaternion algebras $B$ over $F$ in \cite{MR1203228} and we will reference some pertinent results when $B\neq M_2(F)$, with analogous notations. For $\GL_2$, the more natural condition is to consider nearly ordinary cohomology and Hecke algebras, and we refer the reader to \cite{MR1313784} for the details on lifting the results from $\SL_2$ to $\GL_2$. 
 \item
 We mention the more general adelic setting if $F$ has class group $\calC_F$ of order $h_F$. Letting $\mathbb{A}_f$ denote the ring of finite adeles over $F$ and $U$ an open subgroup of $\GL_2(\widehat{\calO}_F)$ small enough so that the arithmetic groups are torsion--free, one can show that the locally symmetric space 
 $$S(U)=\GL_2(F)\backslash (\GL_2(\mathbb{A}_f)/U)\times \mathbb{H}_3$$ 
 is a disjoint union of arithmetic hyperbolic threefolds. Assuming for simplicity the determinant map $\det:U\to \widehat{\calO}_F^\times$ is surjective, we may write $S(U)=\sqcup_{i=1}^{h_F} \Gamma_i\backslash \mathbb{H}_3$, where $\Gamma_i=U\cap M_0(\ib_i)^\times$ for ideals $\ib_i$ forming a set of representatives of $\calC_F$ and maximal orders $M_0(\ib_i)=\{
\begin{pmatrix}
a&b\\
c&d\\
\end{pmatrix}
\in M_2(F) \vert a,d \in \calO_F, b\in \ib_i^{-1}, c\in\ib_i\}
$ of $M_2(F)$. The cohomology of $S(U)$ then decomposes into a direct sum $\bigoplus_{i=1}^{h_F}H^q_{*}(\Gamma_i\backslash \mathbb{H}_3, \underline{M})$ and Hecke operators associated to non-principal ideals act on the cohomology, permuting the terms of the sum. Nevertheless, systems of Hecke eigenvalues associated to non-principal ideals differ from principal ones only by a character of the class group $\calC_F$. 
For details on the general setting and the definition of automorphic forms over imaginary quadratic fields we refer the reader to J. Bygott's thesis  \cite[Sections 5-6]{bygott1998modular}. 
 
 \end{enumerate}
 \end{remark}

Hida constructs nearly ordinary Hecke algebras for $\GL(2)/F$ and arbitrary number fields $F$ and proves the full control theorem  \cite[Theorem 3.2]{MR1313784}. For $\SL(2)$ over an imaginary quadratic field, this already appears in Taylor's work \cite[Section 4]{MR2636500}.  We recall some of the main features, borrowing from Hida's notations:
let $B$ denote a quaternion $F$-algebra split at $p$ and $\infty$, let $\calO_p$ denote $\calO_F\otimes\IZ_p$ and $R$ denote a fixed maximal order of $B$ together with an isomorphism $i_p:R\otimes\IZ_p\to M_2(\calO_p)$. Finally, let $\nu:B\to F$ denote the reduced norm. For an integral ideal $N$ prime to $p$, fix some torsion--free subgroup $\Gamma$ of the norm one elements in $R^\times$ containing $R^\times(N)=\{\gamma\in R^\times\vert \gamma-1\in NR, \nu(\gamma)=1\}$ and define 
\begin{align*}\Delta_0(p^\alpha):=\{\gamma \in \Gamma\vert i_p(\gamma)&= \begin{pmatrix}a&b\\c&d\end{pmatrix} \text{ with }c\in p^\alpha \calO_p\}\\
 \Delta_1(p^\alpha):=\{\gamma \in \Gamma\vert i_p(\gamma)&= \begin{pmatrix}a&b\\c&d\end{pmatrix} \text{ with }c,d-1\in p^\alpha \calO_p\}.
 \end{align*}
When $B=M_2(F)$, one can just take the usual congruence subgroups $\Gamma_0(N)\cap\Gamma_1(p^\alpha)<\SL_2(\calO_F)$ for suitable $N$. \par

For a character $\varepsilon$ of $\Delta_0(p^\alpha)/\Delta_1(p^\alpha)\cong (\calO_p/p^{\alpha}\calO_p)^\times$ we denote by $V_{k,k',\varepsilon}$ the representations where the action of $\gamma\in\Delta_0(p^\alpha)$ on $V_{k,k'}$ is twisted by $\varepsilon(d)$ for $d$ the lower diagonal entry in $i_p(\gamma)\in M_2(\calO_p)$. Hida defines $p$-ordinary cohomology groups $H^i_{\ord}(\Delta_1(p^\infty),V_{k,k',\varepsilon}(E/\calO))=\varprojlim_\alpha H^i_{\ord}(\Delta_1(p^\alpha),V_{k,k',\varepsilon}(E/\calO))$, which are naturally modules over the completed group ring $\calO[[\calO_p^\times]]=\varinjlim_\alpha \calO[(\calO_p/p^\alpha\calO_p)^\times]$. Each continuous character of $\calO_p^\times$ (valued in $\overline{\IZ}_p$) extends to an algebra homomorphism of $\calO[[\calO_p^\times]]$ whose kernel defines a point on $\Spec(\calO[[\calO_p^\times]])$, the space of $p$-adic weights. Up to components, this is just the spectrum of the completed group ring $\Lambda=\calO[[\Gamma_F]]$ of the torsion-free part $\Gamma_F$ of $\calO_p^\times$. Complex conjugation acts on $\Gamma_F$ which decomposes into eigenspaces $\Gamma_F^{+}\times\Gamma_F^{-}$ each non-canonically isomorphic to $\IZ_p$. After choosing topological generators, we may identify $\Lambda\cong \calO[[\IZ_p^2]]\cong \calO[[X,Y]]$. We are interested in arithmetic characters, which are of the form
\begin{align*}\chi: \calO_p^\times&\to \overline{\IZ}_p\\
x&\mapsto x^k\overline{x}^{k'}\varepsilon(x)\\
\end{align*}
for integers $k,k'\geq 0$ and $\varepsilon$ a finite order character of $p$-power conductor. We denote the corresponding point on $\Lambda$ by $P_{k,k',\varepsilon}$. We remark that since we are interested in finiteness results, we mostly just work with one component $\Lambda$ of weight space. For any $\Lambda$-module $M$ and prime ideal $P$ of $\Lambda$ we write $M[P]$ for the submodule annihilated by $P$. Hida shows the following results for $p$-ordinary cohomology groups:

\begin{theorem}[see Theorems I,II and 5.1 of \cite{MR1203228}] \label{prop:allweightsIQF} \ \\
\begin{enumerate}
\item The modules $H^1_{\ord}(\Delta_1(p^\infty), V_{k,k',\varepsilon}(E/\calO))$ and $H^1_{P,\ord}(\Delta_1(p^\infty), V_{k,k',\varepsilon}(E/\calO))$ are of cofinite type over $\Lambda$ and independent of the weight: 
$$H^1_{*,\ord}(\Delta_1(p^\infty), V_{k,k',\varepsilon}(E/\calO))\cong H^1_{*,\ord}(\Delta_1(p^\infty), E/\calO).$$

\item The natural homomorphisms
$$H^1_{\ord}(\Delta_1(p^\alpha), V_{k,k',\varepsilon}(E/\calO))\to H^1_{\ord}(\Delta_1(p^\infty), E/\calO)[P_{k,k',\varepsilon}]$$
$$H^1_{P,\ord}(\Delta_1(p^\alpha), V_{k,k',\varepsilon}(E/\calO))\to H^1_{P,\ord}(\Delta_1(p^\infty), E/\calO)[P_{k,k',\varepsilon}]$$
are Hecke-equivariant, the first being an isomorphism and the second having finite kernel and cokernel.
\end{enumerate}
\end{theorem}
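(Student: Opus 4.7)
The plan is to follow Hida's strategy, built around the ordinary idempotent and a highest-weight-vector construction that trivializes the coefficient system on the ordinary part. First I would define $e=\lim_{n\to\infty}U_p^{n!}$, which converges as an endomorphism of $H^\ast(\Delta_1(p^\alpha),V_{k,k',\varepsilon}(\calO/p^m))$ because each such cohomology group is finite; taking limits in $m$ and $\alpha$ yields the ordinary projector on $H^\ast_{\ord}(\Delta_1(p^\infty),V_{k,k',\varepsilon}(E/\calO))$. The $\calO[[\calO_p^\times]]$-module structure arises from the tower of diamond operators $\langle d\rangle$ as $\alpha$ varies, and cofinite type over $\Lambda$ will follow once weight-independence is in place, by combining it with the finiteness of $H^1_{\ord}(\Delta_1(p^\alpha),E/\calO)[p^n]$ at each finite level.

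For part (1), the key observation I would exploit is that $V_{k,k',\varepsilon}(\calO/p^m)$, as a module over the upper Borel of $i_p(\Delta_1(p^\alpha))$, carries an invariant filtration whose top graded piece is a free rank-one module on which the Borel acts through the character $d\mapsto d^k\bar d^{k'}\varepsilon(d)$. The evaluation at the highest-weight monomial $X^k\otimes\bar X^{k'}$ then gives a Borel-equivariant surjection to a twisted trivial module, and the key lemma (Hida's) is that the ordinary projector annihilates the kernel: on lower weight monomials $U_p$ acts with a factor of $p$, so iterating $U_p^{n!}$ kills them modulo any chosen power of $p$. Pushing this through for each $\alpha$ and $m$ yields an isomorphism of ordinary parts
$$H^1_{\ord}(\Delta_1(p^\alpha),V_{k,k',\varepsilon}(\calO/p^m))\;\cong\;H^1_{\ord}(\Delta_1(p^\alpha),(\calO/p^m)(\chi_{k,k',\varepsilon}))$$
where $\chi_{k,k',\varepsilon}$ is the corresponding character of $\Delta_0(p^\alpha)/\Delta_1(p^\alpha)$. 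Passing to the limit in $\alpha$ and $m$ absorbs the twist into the $\Lambda$-action, producing the asserted weight-independence.

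For part (2), the control statement follows formally once the weight-independence is in hand. The natural map $H^1_{\ord}(\Delta_1(p^\alpha),V_{k,k',\varepsilon}(E/\calO))\to H^1_{\ord}(\Delta_1(p^\infty),E/\calO)[P_{k,k',\varepsilon}]$ factors as the isomorphism just established followed by the inflation into the limit. For ordinary cohomology without cusp condition, inflation-restriction combined with the vanishing of $H^2_{\ord}$ of $\Delta_1(p^\alpha)/\Delta_1(p^\infty)$ (which has virtual cohomological dimension one after twisting by the character) makes this an isomorphism. For the parabolic version, the same diagram applies, but one must now control the discrepancy coming from the boundary $\partial Y(\Delta_1(p^\alpha))$. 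The boundary cohomology decomposes into contributions from cusps, and on the ordinary part these are governed by Eisenstein-type $\Lambda$-modules whose support has codimension at least one in $\Lambda$; the resulting snake lemma diagram shows that the comparison map on parabolic cohomology has $\Lambda$-torsion kernel and cokernel, which combined with the cofinite type statement forces them to be finite.

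The hardest step will be the weight-independence at finite level, specifically verifying that the filtration of $V_{k,k',\varepsilon}(\calO/p^m)$ is preserved by the full $\Delta_0(p^\alpha)$-action in a way compatible with $U_p$, and quantifying exactly how many iterations of $U_p^{n!}$ are needed to kill the kernel modulo $p^m$. The archimedean structure of $F$ (imaginary quadratic rather than totally real) enters only through the antiholomorphic twist in the second factor of $V_{k,k'}$, which is handled symmetrically; the more subtle point is the boundary analysis for parabolic cohomology, which requires identifying the Eisenstein contributions as coming from characters of the cusp stabilizers and checking that their ordinary parts contribute only a $\Lambda$-torsion piece.
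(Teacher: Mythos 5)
The theorem you were asked to prove is, in the paper, a quotation of Hida's results (Theorems~I, II and~5.1 of \cite{MR1203228}); the paper supplies no proof and simply references Hida, so there is no ``in-paper'' argument to compare against. Your outline does reconstruct the broad shape of Hida's strategy: the ordinary idempotent $e=\lim U_p^{n!}$, the filtration of $V_{k,k',\varepsilon}$ as a Borel module with the ordinary projector annihilating the cohomology of the kernel of the highest-weight evaluation, and a control statement plus a boundary/Eisenstein analysis for the parabolic part.

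However, your argument for part~(2) contains a concrete error that is not cosmetic. You claim that the exact control for $H^1_{\ord}$ follows from inflation--restriction together with ``the vanishing of $H^2_{\ord}$ of $\Delta_1(p^\alpha)/\Delta_1(p^\infty)$ (which has virtual cohomological dimension one after twisting by the character).'' For $F$ imaginary quadratic with $p$ split, the relevant quotient is essentially $\calO_p^\times\cong(\IZ_p^\times)^2$, whose torsion-free part is $\IZ_p^2$; this has cohomological dimension \emph{two}, not one, and $\Lambda\cong\calO[[X,Y]]$ is two-dimensional. This is exactly the feature that separates the imaginary quadratic case from the totally real case, where the analogous group is one-dimensional and the naive inflation--restriction argument closes. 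Because of this, the first control map being an isomorphism does not ``follow formally'' from weight-independence; Hida's proof in \cite{MR1203228} requires a genuine additional argument (roughly: one shows that the Pontryagin dual of $H^1_{\ord}(\Delta_1(p^\infty),E/\calO)$ is a finitely generated $\Lambda$-module with no pseudo-null submodule and controls the spectral sequence edge terms directly, using that the relevant cohomology of the arithmetic groups is concentrated in degrees $1$ and $2$). You should correct the dimension claim and flag this step as the one carrying the real technical weight; as written, your sketch would give the false impression that the imaginary quadratic control theorem is a formal consequence of the weight-independence lemma, whereas the extra weight-space dimension is precisely the source of the torsion phenomena ($\IT$ being $\Lambda$-torsion, Theorem~\ref{thm:codimone}) that the rest of the paper is about.
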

Letting $\IT^{\ord}_{k,k'}(p^\alpha;\calO;\varepsilon)$ denote the $\calO$-subalgebra of $\End_\calO(H^1_{P,\ord}(\Delta_1(p^\alpha), V_{k,k',\varepsilon}(E/\calO))$ generated by the Hecke operators, one therefore obtains a universal ordinary Hecke algebra by setting $\IT:=\varinjlim_\alpha\IT^{\ord}_{k,k'}(p^\alpha;\calO)$, together with a control theorem stating that the natural map 
\begin{equation}\IT/P_{k,k',\varepsilon}\IT\to \IT^{\ord}_{k,k'}(p^\alpha;\calO;\varepsilon)\label{map:sp.}\tag{sp.}
\end{equation}
where $\varepsilon$ factors through $(\calO_p/p^\alpha\calO_p)^\times$, has finite kernel and cokernel. We refer the reader to \cite[Theorem 3.2]{MR1313784} for the full control theorem for nearly ordinary Hecke algebras for $GL(2)$.
Since $H^1_{P}(\Delta_i(p^\alpha), V_{k,k'}(\IC))=0$ unless $k=k'$, one obtains:
\begin{cor}\label{cor:parallel} The specialization of $\IT$ at arithmetic points $P_{k,k',\varepsilon}$ is finite as soon as $k\neq k'$. 
\end{cor}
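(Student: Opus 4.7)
The plan is to propagate the vanishing of cuspidal complex cohomology in non-parallel weights, recalled immediately before the statement, through the comparison isomorphisms and then through the control theorem, down to $\IT/P_{k,k',\varepsilon}\IT$.

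A preliminary observation I would make is that $\varepsilon$ is by definition a character of $\Delta_0(p^\alpha)/\Delta_1(p^\alpha)$, so that as a $\Delta_1(p^\alpha)$-module $V_{k,k',\varepsilon}$ is simply $V_{k,k'}$ with the same underlying action. In particular $H^1_P(\Delta_1(p^\alpha), V_{k,k',\varepsilon}(\IC))$ coincides with $H^1_P(\Delta_1(p^\alpha), V_{k,k'}(\IC))$, which vanishes as soon as $k \neq k'$ by the $(\ig,K)$-cohomology computation recalled in Section \ref{sec:one}, and the ordinary direct summand vanishes along with it. Transferring from $\IC$ to $E$ coefficients via the fixed embedding $i:\overline{\IQ}\hookrightarrow \overline{\IQ}_p$ (or by flat base change $E\hookrightarrow\IC$), I would conclude $H^1_{P,\ord}(\Delta_1(p^\alpha), V_{k,k',\varepsilon}(E)) = 0$. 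By Theorem \ref{prop:allweightsIQF}(1) the module $H^1_{P,\ord}(\Delta_1(p^\alpha), V_{k,k',\varepsilon}(E/\calO))$ is of cofinite type over $\calO$; its $\calO$-corank equals the $E$-dimension of the $V_{k,k',\varepsilon}(E)$-cohomology, which is zero, so the module is in fact finite.

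Since $\IT^{\ord}_{k,k'}(p^\alpha;\calO;\varepsilon)$ is by definition an $\calO$-subalgebra of the endomorphism ring of a finite $\calO$-module, it is itself finite over $\calO$. The control theorem stated just after Theorem \ref{prop:allweightsIQF} then supplies the map $\IT/P_{k,k',\varepsilon}\IT \to \IT^{\ord}_{k,k'}(p^\alpha;\calO;\varepsilon)$ with finite kernel and cokernel, from which finiteness of $\IT/P_{k,k',\varepsilon}\IT$ as an $\calO$-module is immediate. The corollary is essentially a formal consequence of the two parts of Theorem \ref{prop:allweightsIQF} combined with the archimedean vanishing, and I do not anticipate any serious obstacle; the only delicate point is checking that the nebentype twist does not spoil the archimedean vanishing, and this is handled by the restriction observation above.
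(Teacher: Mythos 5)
Your proposal is correct and is essentially the argument the paper intends, which is compressed into the single sentence preceding the corollary: the vanishing of $H^1_P(\Delta_1(p^\alpha), V_{k,k'}(\IC))$ for $k\neq k'$ forces the $E/\calO$-valued ordinary parabolic cohomology to have $\calO$-corank zero, hence to be finite, so the finite-level Hecke algebra is finite over $\calO$ and the control theorem transfers this to $\IT/P_{k,k',\varepsilon}\IT$. The only small blemish is the attribution of cofinite generation of the finite-level module to Theorem \ref{prop:allweightsIQF}(1), which is stated at infinite level over $\Lambda$; cofinite generation over $\calO$ at finite level is more directly a consequence of $\Delta_1(p^\alpha)$ being of finite homological type, or can be deduced from part (2) together with $\Lambda/P_{k,k',\varepsilon}$ being finite over $\calO$.
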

In particular, the Hecke algebra $\IT$ cannot be free over $\Lambda$, but has to be a torsion module \cite[Corollary 4.4]{MR2636500}. We recall that in contrast when $F=\IQ$, Hida constructs universal $p$-adic ordinary Hecke algebras which are finitely generated and free over weight space $\calO[[\IZ_p^\times]]$. Together with the control theorems this implies the specialization at \emph{any} points on $\calO[[\IZ_p]]\cong \calO[[T]]$ coming from arithmetic characters $x\mapsto x^k\varepsilon(x)$ on $\IZ_p^\times$ recovers the system of Hecke eigenvalues of a classical eigenform, where $k\geq 0$ and $\varepsilon$ has finite $p$-power order (this corresponds to weight $\geq2$ in the usual terminology for modular forms). For arbitrary number fields, we will call such points on weight space \emph{classical points}. The constructions and results above exist more generally for arbitrary number fields $F$ and cohomology in the appropriate degrees. It follows similarly that, as soon as $F$ has a complex place, the relevant Hecke algebra as a module over $\calO[[(\calO_F\otimes\IZ_p)^\times]]$  is torsion \cite[Theorems 2.5, 5.2]{MR1313784}, whereas for totally real fields the behavior resembles the case of modular forms and the Hecke algebras are torsion-free \cite[Theorem II]{MR960949}. More precisely, Hida conjectures:

\begin{conj} [\cite{MR1313784}, Conjecture 4.3]\label{conj:codim}
The image of the nearly ordinary Hecke algebra in weight space has codimension the number of complex places of $F$.
\end{conj}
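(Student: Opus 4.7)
The plan is to prove Conjecture \ref{conj:codim} by splitting it into two opposite inequalities, mirroring Hida's treatment of the imaginary quadratic case (which already yields pure codimension one). Write $r_2$ for the number of complex places of $F$ and $d=[F:\IQ]$, so that up to connected components $\Lambda$ has Krull dimension $d$ over $\calO$.

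First I would establish the lower bound $\codim_\Lambda \IT \geq r_2$. The input is purely archimedean: at each complex place $v$, a cohomological cuspidal automorphic representation contributes to $(\mathfrak{g}_v,K_v)$-cohomology with coefficients in $V_{k_v,k'_v}$ only when $k_v=k'_v$. This is the conjugate self-duality observation recalled in the excerpt and specialised there to imaginary quadratic $F$ (where it forces $H^1_{P}$ to vanish unless $k=k'$). Combining this archimedean constraint with Hida's full control theorem in \cite{MR1313784}, any arithmetic point $P_{\underline{k},\underline{k}',\varepsilon}$ at which the specialisation of $\IT$ is nonzero must satisfy $k_v=k'_v$ at every one of the $r_2$ complex places. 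Each such equality cuts out a divisor in the two-parameter weight factor contributed by $v$, and together they define a codimension-$r_2$ closed subscheme of $\Spec(\Lambda)$. Since the arithmetic points with this constraint are Zariski dense in that subscheme, the image of $\Spec(\IT)\to\Spec(\Lambda)$ is contained in its closure, giving $\codim\geq r_2$.

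For the upper bound $\codim_\Lambda \IT \leq r_2$, I would need to construct enough $p$-adic families whose combined image is of dimension $d-r_2$. The natural approach is functoriality: when $F$ admits a totally real subfield $F^+$ with $F/F^+$ CM, base change from Hida's families for $\GL(2)/F^+$ supplies families over $F$ concentrated in the conjugate self-dual slice, which has the expected dimension. For fields $F$ not arising this way one would instead invoke an overconvergent/eigenvariety construction for $\GL(2)/F$, and then verify that the resulting families sweep out a codimension-$r_2$ locus by hitting a Zariski dense set of classical points in every conjugate self-dual slice.

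The principal obstacle is the upper bound. The lower bound is essentially a combination of the archimedean $(\mathfrak{g},K)$-cohomology vanishing with Hida's existing control theorems, whereas producing sufficiently many non-base-change cohomological cuspidal families for $\GL(2)/F$ when $F$ has complex places is exactly why Conjecture \ref{conj:codim} remains open outside of the totally real and imaginary quadratic cases; the present paper's finiteness results can be read as evidence that, at least when $r_2=1$, no unexpected families beyond those predicted by the conjecture arise.
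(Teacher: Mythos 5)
The item you are asked to prove is stated in the paper as a \emph{conjecture} (it is Hida's Conjecture~4.3 from \cite{MR1313784}), and the paper supplies no proof of it; the only result in this direction that the paper invokes is Hida's Theorem~6.2, which establishes the imaginary quadratic case $r_2=1$. So there is nothing in the paper for your argument to match, and any complete argument you produced would be a new theorem, not a reconstruction of one already present.

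On the substance of your sketch: you honestly flag the upper bound $\codim_\Lambda\IT\leq r_2$ as the obstacle, and that is exactly the gap that keeps this a conjecture. Outside the totally real and imaginary quadratic cases one does not know how to produce enough classical cohomological families to show the Hecke algebra genuinely fills out a codimension-$r_2$ slice; appealing to ``eigenvariety constructions'' without a density-of-classical-points input is circular for precisely the reason this paper studies. But the lower bound sketch is also not rigorous. Corollary~\ref{cor:parallel} only gives that $\IT/P_{k,k',\varepsilon}\IT$ is \emph{finite} when $k\neq k'$; because the control map has finite (not necessarily zero) kernel and cokernel, finiteness of the specialisation does not imply that $P_{k,k',\varepsilon}$ lies outside the support of $\IT$. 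What Hida actually extracts from the vanishing of cuspidal cohomology off the conjugate self-dual weights is that $\IT$ is a \emph{torsion} $\Lambda$-module (giving $\codim\geq1$ whenever $r_2\geq1$, cf.\ \cite[Theorems~2.5, 5.2]{MR1313784}); upgrading this to $\codim\geq r_2$ requires a dimension-theoretic argument on the module side that your ``each equality cuts out a divisor'' heuristic does not supply. In the $r_2=1$ case the torsion statement already coincides with the desired bound, which is why Hida's Theorem~6.2 closes the argument there, but the same reasoning does not bootstrap to $r_2>1$.
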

For CM components, we note that this amounts to Leopoldt's Conjecture for the respective number field. As soon as $F$ has a complex place, it is therefore no longer clear whether or not, as for totally real fields, the Hecke algebras interpolate a Zariski-dense set of systems of Hecke eigenvalues corresponding to classical automorphic forms. The results of \cite{MR2461903} on deformations of Galois representations together with $R=\IT$ considerations suggest that this is not to be expected in general. Even for automorphic representations on $\GL(1)$, for general number fields density does not necessarily hold \cite[Section 8.4]{MR2461903}. See also \cite{Loeffler:2010ab}, where it is shown that when $F$ is not totally real and does not contain a CM subfield, algebraic Gr\"ossencharacters of $F$ are not dense on the eigenvariety in the rigid analytic topology. 
For $\GL(2)/F$ when $F$ is imaginary quadratic, it is thus natural to ask: 

\begin{question}\label{question:dense}
When does the support in $\Lambda$ of a component of $\IT$ contain a Zariski-dense set of classical points? 
\end{question}

It follows from Corollary \ref{cor:parallel} that the points on $\Spec(\Lambda)$ that can give rise to classical cuspidal automorphic forms have parallel weight. Recall the decomposition into eigenspaces for complex conjugation $\Gamma_F^+\times\Gamma_F^-$ of the torsion-free part $\Gamma_F$ of $\calO_p^\times$. We denote by $\varepsilon^\pm$ the restriction of a $p$-power order character $\varepsilon$ of $\calO_p^\times$ to the respective piece. After choosing topological generators and suitably normalizing the $\Lambda=\calO[[\Gamma_F]]\cong\calO[[X,Y]]$-module structure, we may then assume these points are of the form 
\begin{align*}P_{k,\zeta,\zeta'}:&=(X+1-\zeta(1+p)^k, Y+1-\zeta'(1+p)^k)\Lambda\\
&=(X+1-\varepsilon^{+}(1+p)(1+p)^k, Y+1-\varepsilon^{-}(1+p)(1+p)^k)\Lambda
\end{align*}
 for integers $k\geq 0$ and $p$-power roots of unity $\zeta, \zeta'\in \mu_{p^\infty}(\overline{\IQ}_p)$.

Moreover, it is known that for imaginary quadratic fields, Conjecture \ref{conj:codim} holds.  Namely, when $B=M_2(F)$ and $F$ is imaginary quadratic, Hida shows:
 \begin{theorem}[Theorem 6.2 of \cite{MR1313784}]\label{thm:codimone}
 The Pontryagin dual $M$ of the ordinary parabolic cohomology group $H^1_{P,\ord}(Y(\Delta_0(p^\alpha)),\underline{\calC})$ is a $\Lambda$-torsion module of homological dimension one. 
 \end{theorem}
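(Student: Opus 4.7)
The plan is to establish the two conclusions separately: first that $M$ is a torsion $\Lambda$-module, and second that $\mathrm{pd}_\Lambda M \le 1$. Since $\Lambda$ is a regular local ring and a nonzero torsion module cannot be $\Lambda$-free, these combine to force $\mathrm{pd}_\Lambda M = 1$.

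\textbf{Torsion.} For any arithmetic prime $P_{k,k',\varepsilon}\subset\Lambda$, Theorem \ref{prop:allweightsIQF} identifies $M/P_{k,k',\varepsilon}M$, up to finite kernel and cokernel, with the Pontryagin dual of the classical parabolic cohomology $H^1_{P,\ord}(\Delta_1(p^\alpha), V_{k,k',\varepsilon}(E/\calO))$. Via the embedding $i\colon\overline{\IQ}\hookrightarrow\overline{\IQ}_p$ this is compared with complex coefficients, where by Eichler--Shimura the parabolic cohomology is computed by the $(\ig,K)$-cohomology of cuspidal representations of $\GL_2(\IC)$; as recalled in the excerpt, this $(\ig,K)$-cohomology forces the coefficients to be conjugate self-dual, so the group vanishes whenever $k\ne k'$. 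Hence $M/P_{k,k',\varepsilon}M$ is finite for every such arithmetic prime, and since these form a Zariski-dense subset of $\Spec(\Lambda)$, the generic rank of $M$ over $\Lambda$ must be zero.

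\textbf{Projective dimension.} Equivalently, I must show $M$ has no associated primes of codimension $\ge 2$. I would use the ordinary part of the Borel--Serre boundary long exact sequence for $Y = Y(\Delta_0(p^\alpha))$ with $\underline{E/\calO}$-coefficients,
\begin{equation*}
\cdots \to H^0_{\ord}(\partial Y) \to H^1_{c,\ord}(Y) \to H^1_{\ord}(Y) \to H^1_{\ord}(\partial Y) \to H^2_{c,\ord}(Y) \to \cdots,
\end{equation*}
which truncates since the virtual cohomological dimension of $\Gamma\backslash\mathbb{H}_3$ is $2$ and $\partial Y$ (a disjoint union of $2$-tori, one per cusp) is two-dimensional. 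Extracting the short exact sequence $0 \to H^1_{P,\ord}(Y) \to H^1_{\ord}(Y) \to I \to 0$ with $I \hookrightarrow H^1_{\ord}(\partial Y)$ and Pontryagin-dualizing gives a presentation
\begin{equation*}
0 \to I^\vee \to H^1_{\ord}(Y)^\vee \to M \to 0.
\end{equation*}
Freeness of $H^1_{\ord}(Y)^\vee$ as a $\Lambda$-module (the imaginary-quadratic analogue of Hida's freeness theorem for ordinary cohomology, obtained by combining Theorem \ref{prop:allweightsIQF} with the vanishing $H^i_{\ord}(Y)=0$ for $i>2$) together with projectivity of $I^\vee$ (analyzed cusp by cusp, using that the induced $\Lambda$-action on each boundary $2$-torus factors through the characters of $\calO_p^\times$ coming from the diagonal torus) exhibits $M$ as a cokernel of a map between free modules, yielding $\mathrm{pd}_\Lambda M \le 1$. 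A Poincar\'e--Lefschetz duality argument relating $H^1_{c,\ord}$ and $H^2_{\ord}$ on the three-manifold $Y$ enters to identify any potential pseudo-null contribution in $M$ with a subquotient of the ordinary boundary cohomology, which the explicit cusp-by-cusp analysis rules out.

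\textbf{Main obstacle.} The delicate step is establishing projectivity of $I^\vee$ and freeness of $H^1_{\ord}(Y)^\vee$: since $Y$ is non-compact, one must carefully combine the weight-independence of Theorem \ref{prop:allweightsIQF} with an integral analysis of the ordinary idempotent near the Borel--Serre boundary, using the vanishing of $H^2_{\ord}$ on the open locally symmetric space to rule out higher $\Lambda$-torsion contributions. The torsion assertion is comparatively direct; once the freeness/projectivity inputs are in place the projective-dimension conclusion follows formally from the boundary long exact sequence and Pontryagin duality.
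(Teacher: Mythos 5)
Note first that the paper under review does \emph{not} prove this statement; it is quoted verbatim from Hida (Theorem~6.2 of \cite{MR1313784}) and used as a black box. So there is no ``paper's own proof'' to compare against, and I will assess your proposal on its own terms.

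Your torsion argument is correct and is the standard one; the paper essentially records it already in the discussion around Corollary~\ref{cor:parallel} (non-parallel classical specializations vanish, the control theorem makes the corresponding arithmetic fibers finite, and these fibers lie over a Zariski-dense set of height-two primes, so the support of $M$ is a proper closed subset). The real content of Hida's theorem is therefore the depth/projective-dimension assertion, and this is exactly where your proposal has a genuine gap. The shape of the argument — extract $0 \to I^\vee \to H^1_{\ord}(Y)^\vee \to M \to 0$ from the Borel--Serre boundary sequence and show the outer two terms are free/projective — is reasonable and does resemble the kind of analysis Hida performs, but the two claims you rely on are asserted rather than established, and the justification you offer for the first one cannot work as stated. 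The vanishing $H^i_{\ord}(Y)=0$ for $i>2$ is automatic from the cohomological dimension of a Bianchi $3$-fold and places no constraint at all on $H^1$; and Theorem~\ref{prop:allweightsIQF} gives control, not freeness. Freeness of $H^1_{\ord}(Y)^\vee$ over $\Lambda$ would require the $\calO$-rank of the specializations $H^1_{\ord}(\Delta_1(p^\alpha),V_{k,k',\varepsilon})$ to be constant over a Zariski-dense set of arithmetic weights, but the parabolic part of that rank is $0$ whenever $k\neq k'$ and positive for parallel weights — the very phenomenon that makes $M$ torsion. So your proposed freeness can only hold if the ordinary Eisenstein contribution to $H^1$ exactly compensates this jump, which is precisely the delicate computation one cannot bypass. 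Likewise, ``projectivity of $I^\vee$, analyzed cusp by cusp'' requires a concrete description of $H^1_{\ord}(\partial Y,\underline{\calC})$ as a $\Lambda$-module (the boundary tori, unlike in the $\GL_2/\IQ$ case, carry nontrivial cohomology in degrees $0,1,2$ interacting with the ordinary projector), and this is not supplied. In short: the decomposition of the problem is right, but the two lemmas you've reduced it to are themselves the hard part of Hida's Theorem~6.2, and the reasons you sketch for them do not suffice.
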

 Here, $\calC$ denotes the space of continuous functions on the column space $X=(\calO_p^\times, p\calO_p)^t$ with values in $E/\calO$. Effectively, the cohomology with coefficients in the locally constant sheaf $\underline{\calC}$ associated to $\calC$ computes the relevant cohomology at infinite level:
 \begin{align*} H^1_{*, \ord}(Y(\Delta_0(p)),\underline{\calC})&\cong H^1_{*, \ord}(Y(\Delta_1(p^\infty)),\underline{E/\calO})\\
 &= \varinjlim_{\alpha} H^1_{*, \ord}(Y(\Delta_1(p^\alpha)),\underline{E/\calO}).\\
 \end{align*}
 In particular, following Hida one may take the universal ordinary Hecke algebra $\IT$ to be the $\calO$-subalgebra of $\End_{\calO}(H^1_{P,\ord}(Y(\Delta_0(p)),\underline{\calC})))$ generated by the Hecke operators. 
 \begin{remark}
 When $B\neq M_2(F)$ is an indefinite division algebra, Hida also proves that the dual $M$ of $H^1_{\ord}(Y(\Delta_0(p^\alpha)),\underline{\calC})$, where $Y(\Delta_0(p^\alpha))$ is now compact, is killed by a non-zero element of $\Lambda$ and has homological dimension one. The same holds true after lifting to $\GL_2$ and considering nearly ordinary cohomology, provided $p>2$ (see \cite[Theorem 5.2]{MR1313784} and the ensuing discussion).
 \end{remark}
 In both cases the respective (nearly) ordinary Hecke algebra $\IT$ acts faithfully on the (nearly) ordinary cohomology and embeds into the respective dual $M$. Since $M$ is finitely generated over $\IT$, letting $m$ denote the minimum number of generators we have the divisibility of characteristic power series: $\ch(\IT)\vert \ch(M)\vert \ch(\IT)^m$. Since we will be analyzing the zeroes of these power series to study the support of our $p$-adic families in weight space, we may simply work with $\ch(\IT)$, and we record here, for some implicit tame level $N$, for $B=M_2(F)$ or $B$ a division algebra, and $F$ an imaginary quadratic field:
 \begin{cor}\label{cor:charpow}
 The (nearly) ordinary Hecke algebra $\IT$ is annihilated by a non-zero power series $\phi\in\calO[[X,Y]]$. 
 \end{cor}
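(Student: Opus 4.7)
The plan is to derive the corollary as a direct formal consequence of Theorem \ref{thm:codimone} (and its analogue for division algebras recorded in the preceding remark), via the embedding $\IT \hookrightarrow M$. The two ingredients supplied by Hida are: (i) the Pontryagin dual $M$ of the relevant (nearly) ordinary cohomology is finitely generated and torsion over $\Lambda \cong \calO[[X,Y]]$, and (ii) the Hecke algebra $\IT$ acts faithfully on this cohomology, so it embeds $\Lambda$-linearly into $M$.

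First I would invoke the elementary fact that a finitely generated torsion module over an integral domain admits a nonzero annihilator. Namely, if $m_1,\ldots,m_n$ is a $\Lambda$-generating set for $M$ and $\phi_i \in \Lambda \setminus \{0\}$ annihilates $m_i$, then $\phi := \phi_1 \cdots \phi_n$ is nonzero (since $\Lambda \cong \calO[[X,Y]]$ is a domain) and annihilates $M$. Applied here, this produces a nonzero $\phi \in \calO[[X,Y]]$ with $\phi \cdot M = 0$. Transporting along the $\Lambda$-equivariant embedding $\IT \hookrightarrow M$ then gives $\phi \cdot \IT = 0$, which is exactly the claim.

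Equivalently, one could argue via the characteristic power series displayed just above the statement: Theorem \ref{thm:codimone} implies $\ch(M) \neq 0$ (as $M$ is $\Lambda$-torsion of pure homological dimension one), and the divisibility $\ch(\IT) \mid \ch(M)$ then forces $\ch(\IT) \neq 0$ as well, from which any choice of nonzero annihilator of $\IT$ (for instance a product of annihilators of a finite $\Lambda$-generating set) does the job. The same argument applies verbatim in the case $B \neq M_2(F)$, using \cite[Theorem 5.2]{MR1313784} (valid for $p > 2$ upon lifting to $\GL_2$) as the substitute for Theorem \ref{thm:codimone}.

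I do not anticipate any substantive obstacle: Hida's codimension-one result is the nontrivial input, and what remains is purely formal commutative algebra over $\calO[[X,Y]]$. The only care required is to distinguish between the cases $B = M_2(F)$ and $B$ an indefinite division algebra, so as to invoke the appropriate finiteness result on the dual of the relevant cohomology group as input to the annihilator argument.
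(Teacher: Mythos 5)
Your argument is correct and follows the paper's route: both rest on the $\Lambda$-linear embedding $\IT\hookrightarrow M$ together with Theorem \ref{thm:codimone} (resp.\ its division-algebra analogue in the remark), which supplies that $M$ is finitely generated and $\Lambda$-torsion. The paper records the conclusion via the divisibility $\ch(\IT)\mid\ch(M)\mid\ch(\IT)^m$, which is exactly your alternative phrasing; your primary annihilator-of-a-finite-generating-set argument is a mild repackaging of the same idea.
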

 Thus, to answer question \ref{question:dense} one is led to study which cyclic modules $\Lambda/\phi$ contain a dense set of arithmetic points that can give rise to classical cuspidal automorphic forms. The Zariski closure of the points $P_{k,\zeta,\zeta'}$ on a component of the support of $\IT$ either contains the diagonal in $\Spec(\Lambda)$ or intersects it in finitely many points. However, one cannot a priori exclude the case that the component contains infinitely many points of varying non-parallel Nebentypus, and we study this in more detail in Section \ref{sec:two}. \par
It can indeed be the case that the support of $\IT$ contains the diagonal via functoriality constructions: assuming $p$ is split, the base--change of a Hida family over $\IQ$ gives rise to a component of the Hecke algebra of the same tame level with classical automorphic forms in each parallel weight $k\geq 0$. Similarly, such examples may arise via theta lifts from suitable Gr\"ossencharacters of quadratic extensions $L/F$. We refer the reader to  \cite[Proposition 4.4]{MR2636500} for details. 
On the Galois side, which is equally mysterious, Calegari and Mazur conjecture in \cite[Conjecture 1.3]{MR2461903}:
\begin{conj} Let $F$ an imaginary quadratic field and $p$ a split prime. Suppose a representation $\rho:\Gal(\overline{F}/F)\to\GL_2(E)$ is continuous, irreducible, nearly ordinary and unramified outside a finite set of places and admits infinitesimally classical deformations. Then one of the following holds: 
\begin{itemize}
\item[``BC''] There exists a character $\chi$ such that $\chi\otimes\rho$ descends to a representation over $\IQ$. \\
\item[``CM''] The projective image of $\rho$ is dihedral, and the determinant character descends to $\IQ$ with a CM fixed field in which a prime above $p$ splits. \\
\end{itemize}
\end{conj}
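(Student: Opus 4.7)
My plan is to translate the conjecture into a statement about the tangent space of an appropriate universal deformation ring and then to constrain this tangent space using a combination of Galois cohomology and $p$-adic transcendence. Let $R^{\mathrm{n.o.}}$ denote the universal nearly ordinary deformation ring of $\rho$ with unramified conditions outside a fixed set of places. The tangent space at $\rho$ is identified with the Selmer group $H^1_{\mathrm{Sel}}(F, \ad^0 \rho)$ carrying the nearly ordinary local conditions at the primes above $p$. The hypothesis that $\rho$ admits infinitesimally classical deformations then translates to the existence of sufficiently many classes in this Selmer group that are tangent to the (formal) classical locus inside the deformation space.

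\textbf{Selmer group analysis.} First I would compute the dimension of $H^1_{\mathrm{Sel}}(F, \ad^0 \rho)$ via the Greenberg--Wiles formula. Over an imaginary quadratic $F$, Tate's global Euler characteristic formula produces one extra dimension (mirroring on the Galois side the codimension one support of Theorem \ref{thm:codimone}) compared to the totally real case, where the Selmer group generically vanishes. My next step would be to classify which classes arise: outside the BC and CM cases, $\ad^0 \rho$ is absolutely irreducible and any class in the Selmer group must be a genuinely ``new'' infinitesimal deformation rather than a twist of $\rho$ by an infinitesimal character. In the BC case, $\ad^0 \rho$ acquires a sub-representation invariant under the non-trivial element of $\Gal(F/\IQ)$; in the CM case, $\ad^0 \rho$ contains the quadratic character cutting out the CM field. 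In both scenarios one obtains the extra Selmer classes responsible for the infinitesimally classical deformations.

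\textbf{Transcendence and conclusion.} The crux is then to show that a Selmer class tangent to the classical locus must come from one of these two special structures. A classical tangent direction corresponds to an infinitesimal twist by a character of $\Gal(\overline{F}/F)$ whose $p$-adic logarithm lies on the algebraic subvariety of weight space cut out by the points $P_{k,\zeta,\xi}$. Here one needs a $p$-adic transcendence input, in the spirit of the multiplicative Manin--Mumford theorem underlying Theorem \ref{thm:support}, to assert that the only ways to land algebraically on the classical locus in infinite families are descent to $\IQ$ or CM induction. The main obstacle is precisely this transcendence hypothesis: it amounts to a $p$-adic Schanuel-type statement about $p$-adic logarithms of units, and it is the point where both \cite{MR2461903} and \cite{Loeffler:2010ab} remain conditional. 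Without such input, one cannot a priori rule out Selmer classes accidentally aligned with the classical locus; removing this conditionality seems to require either genuine new progress in $p$-adic transcendence theory or a geometric approach that directly exploits the algebraic rigidity of the eigenvariety analogous to the Hecke-algebra side of this paper.
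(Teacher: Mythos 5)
This statement is labeled a \emph{conjecture}, not a theorem: it is Conjecture 1.3 of Calegari and Mazur, reproduced in the paper for context, and the paper does not prove it (nor claim to). The paper only remarks that Calegari and Mazur establish it conditionally, assuming a strengthening of Leopoldt's Conjecture in the finite-image case, and separately produce one-parameter nearly ordinary families with only finitely many crystalline specializations. There is therefore no proof in the paper against which to check your argument.

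Your proposal sketches the standard deformation-theoretic attack: identify the tangent space with a nearly ordinary Selmer group $H^1_{\mathrm{Sel}}(F,\ad^0\rho)$, note the extra dimension forced by the Euler characteristic over an imaginary quadratic field, isolate the structural sources of that extra dimension in the BC and CM cases, and then appeal to a $p$-adic transcendence statement to exclude accidental alignment of Selmer classes with the classical locus. This is in fact broadly the route taken in \cite{MR2461903}, and you are right that the transcendence input (a strengthened Leopoldt or Schanuel-type statement for $p$-adic logarithms) is exactly where the argument remains conditional. But that means your proposal is not a proof: it is a correct diagnosis of why the conjecture is open, dressed as a proof attempt. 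The honest conclusion is that you have rederived the conditional framework, not closed it. Two further points worth noting: (i) the Greenberg--Wiles computation alone does not classify \emph{which} Selmer classes are ``infinitesimally classical'' in the precise sense of \cite{MR2461903}, namely cutting out a $\IQ$-rational line in the infinitesimal weight space; making that translation rigorous requires more care than ``tangent to the classical locus''; and (ii) the multiplicative Manin--Mumford result used elsewhere in this paper (Proposition \ref{prop:mmm}) operates on the Hecke/weight side and does not substitute for the missing Galois-theoretic transcendence input, so invoking it here ``in the spirit of'' Theorem \ref{thm:support} does not actually supply the needed lemma.
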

An infinitesimally classical deformation in the language of \cite{MR2461903} is a point of the infinitesimal nearly ordinary deformation ring $R$ of $\rho$ that gives rise to a $\IQ$-rational line on the tangent space (at weight zero) of the $E$-vector space of infinitesimal $p$-adic weights. In particular, if a one-parameter family of nearly ordinary deformations arose from classical automorphic forms over $F$, they would have rational infinitesimal Hodge-Tate weights and give rise to infinitesimally classical deformations. \\
Calegari and Mazur prove the conjecture assuming a stengthening of Leopoldt's Conjecture when $\rho$ has finite image \cite[Theorem 1.2]{MR2461903}, as well as the existence under suitable hypotheses of one-parameter families of nearly ordinary deformations with finitely many crystalline specializations of parallel weight \cite[Theorem 1.5]{MR2461903}. Therefore, guided by $R=\IT$ conjectures, we are inclined to believe the automorphic analogues of these results and seek to prove the existence of families with finitely many classical points. On the other hand, significant recent progress in modularity \cite{2018arXiv181209999A} suggests that it may be possible to show that the automorphic and Galois-theoretic questions are equivalent.

 \section{Algebraic criterion}\label{sec:two}
 Let $\calO$ denote the valuation ring of a finite extension of $\IQ_p$. In this section we establish the key rigidity properties for two-variable formal power series over $\calO$ vanishing at points $P_{k,\zeta,\zeta'}:=((1+p)^k\zeta-1,(1+p)^k\zeta'-1)\in \overline{\IQ}_p^2$ for $p$-power roots of unity $\zeta,\zeta'$ and positive integers $k$, building upon previous work of the author. These statements are inspired by rigidity results appearing in Hida's work (see e.g., \cite[Section 4]{MR3220926}). We show the following:
 \begin{prop}\label{prop:twovark}
Let $\phi\in \calO[[X,Y]]$ be a power series. Suppose that
$$\phi((1+p)^{k}\zeta-1,(1+p)^{k}\zeta'-1)=0$$
for an infinite set $\Sigma$ of triples $(k,\zeta,\zeta')$  in $\IZ_p\times \mu_{p^\infty}^2$. We then have the following:
\begin{enumerate}
    \item We may find fixed $N,K\in \IZ_p$  and $\xi\in\mu_{p^\infty}$ such that, up to possibly swapping coordinates, we have for every $\zeta\in \mu_{p^\infty}$ the vanishing:
$$\phi((1+p)^K\zeta-1,(1+p)^K\xi\zeta^N-1)=0.$$
\item If, in addition to that, $\phi$ is irreducible and passes through the origin, then $\phi(X,Y)=(X+1)^N-(Y+1)$ for some $N\in\IZ_p$ (up to coordinate swap); in particular, in this case there are infinitely many weights $k$ in $\Sigma$ if and only if $\phi$ vanishes along the diagonal.
\end{enumerate}

 \end{prop}
This, together with our previous considerations, yields the main result of this section: 
\begin{theorem}\label{thm:mainalgebraic}
Let $F$ be an imaginary quadratic field of class number one and $p$ a prime split in the ring of integers $\calO_F$. Let $\varphi$ be a cuspidal automorphic form on $\GL(2)$ over $F$ (nearly) ordinary at $p$ of cohomological type. Let $\calH$ denote the cuspidal (nearly) ordinary $p$-adic family containing $\varphi$. Normalize the $\Lambda$-module structure so that $\varphi$ corresponds to $X=Y=0$ on $\Lambda\cong \calO[[X,Y]]$. Then the component $D$ of the support of $\calH$ in $\Lambda$ passing through $\varphi$ either:
\begin{enumerate}
\item contains the diagonal $X=Y$ or
\item contains only finitely many points that give rise to classical automorphic forms or 
\item contains only finitely many points $P_{k,\zeta,\zeta'}$ with weight $k$ different from the weight of $\varphi$ but an infinite collection of points with the same weight as $\varphi$ ($k_\varphi=0$ in $\Lambda$ with this normalization) and varying non-parallel $p$-power nebentypus characters. Moreover, there is some $N\in \IZ_p$ such that $D$ contains $P_{0,\zeta,\zeta^N}$ (or $P_{0,\zeta^N,\zeta}$) for every $\zeta\in \mu_{p^\infty}$. 
\end{enumerate}
\end{theorem}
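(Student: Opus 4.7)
The plan is to represent the component $D$ by a single irreducible power series using the algebra of $\Lambda$ and then feed the classical points on $D$ into Proposition~\ref{prop:twovark}. First I would note that by Corollary~\ref{cor:charpow} the ordinary Hecke algebra $\IT$, and hence $\calH$, is annihilated by some nonzero $\phi\in\calO[[X,Y]]$, so $D\subseteq V(\phi)$ has pure codimension one in $\Spec(\Lambda)$. Since $\Lambda\cong\calO[[X,Y]]$ is a regular local ring and hence a UFD, the irreducible component $D$ is principal, cut out by an irreducible divisor $\phi_D\mid\phi$. Under the normalization placing $\varphi$ at the origin, $\phi_D(0,0)=0$.

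If $D$ contains only finitely many classical points, case~(2) of the theorem already holds; so the nontrivial branch is to assume there are infinitely many and apply Proposition~\ref{prop:twovark} to $\phi_D$ together with this infinite set of classical specializations. Alternative~(1) of the proposition says $\phi_D$ vanishes on the diagonal, immediately yielding case~(1) of the theorem.

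In alternative~(2), irreducibility of $\phi_D$ and $\phi_D(0,0)=0$ force, up to swapping coordinates, $\phi_D(X,Y)=(X+1)^N-(Y+1)$ for some $N\in\IZ_p$. Next I would intersect this curve with the classical locus: substituting $X+1=(1+p)^k\zeta$ and $Y+1=(1+p)^k\zeta'$ into $(X+1)^N=Y+1$ gives
\[
(1+p)^{(N-1)k}\;=\;\zeta'\zeta^{-N}\in\mu_{p^\infty}.
\]
The left-hand side lies in the torsion-free pro-$p$ group $1+p\IZ_p$ while the right-hand side is a root of unity, so both equal $1$. Hence $(N-1)k=0$ and $\zeta'=\zeta^N$. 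The case $N=1$ degenerates $\phi_D$ to the diagonal and lands us back in case~(1); for $N\neq 1$, every classical point on $D$ has weight $k=0=k_\varphi$ and nebentypus of the form $(\zeta,\zeta^N)$. Since $\zeta\neq\zeta^N$ except for finitely many $\zeta\in\mu_{p^\infty}$, infinitely many such points are genuinely non-parallel, giving case~(3), with the coordinate swap producing the $(\zeta^N,\zeta)$ variant.

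The main point requiring care will be making sure the normalization is legitimate: classical points in the original coordinates take the form $P_{k,\zeta,\zeta'}=((1+p)^k\zeta-1,(1+p)^k\zeta'-1)$, and translating $\varphi$ from $P_{k_\varphi,1,1}$ to the origin must preserve this shape so that Proposition~\ref{prop:twovark} applies verbatim. This is achieved by absorbing the factor $(1+p)^{k_\varphi}$ into the choice of topological generator of $\calO_p^\times$, and beyond this I anticipate no substantive obstacle, the real input being the rigidity supplied by Proposition~\ref{prop:twovark}.
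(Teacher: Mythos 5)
Your proof is correct and follows the same route as the paper: represent the component $D$ by an irreducible power series via the UFD structure of $\Lambda\cong\calO[[X,Y]]$, then apply Proposition~\ref{prop:twovark} to the infinite set of classical specializations. The paper's proof is terser (it simply says ``the result follows'' after invoking the two alternatives of Proposition~\ref{prop:twovark}), whereas you usefully spell out the implicit verification in the non-diagonal case --- namely that substituting a classical point $((1+p)^k\zeta-1,(1+p)^k\zeta'-1)$ into $(X+1)^N=(Y+1)$ forces $(1+p)^{(N-1)k}=\zeta'\zeta^{-N}$, and separating the torsion-free part from the torsion part gives $k=0$ when $N\neq 1$, so the infinitely many classical points all sit in weight $k_\varphi=0$ with non-parallel nebentypus $(\zeta,\zeta^N)$. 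This computation is implicit in the paper's deduction but worth making explicit, since it is exactly what identifies case~(3). Your closing remark about the normalization is the right thing to flag: the translation moving $\varphi$ to the origin is a multiplicative shift of coordinates which carries classical points to classical points (shifting $k$ and the nebentypus parts), so Proposition~\ref{prop:twovark} indeed applies verbatim after normalization.
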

\begin{proof}
The support of $\calH$ has pure codimension one in $\Lambda$ and by Corollary \ref{cor:charpow} the family has a characteristic power series.  By our assumptions on $D$ it suffices to consider its irreducible factor $\phi$ passing through the origin. By Proposition \ref{prop:twovark}, if $\phi$ vanishes along infinitely many points $P_{k,\zeta,\zeta'}$ that give rise to classical cuspidal automorphic forms, then $\phi$ either vanishes along the diagonal or is a unit multiple of $(X+1)^N-(Y+1)$ for some $N\in \IZ_p$ and up to swapping $X$ and $Y$. The result follows. 
\end{proof}
\begin{remark}\label{rem:generalh}
The assumption on the class number is not necessary in Theorem \ref{thm:mainalgebraic}. However we have not given a detailed proof in the general case to simplify exposition. It is easy to see that, since the cohomology of the relevant locally symmetric spaces as in Remark \ref{rem:simplify} decomposes into $h_F$ pieces, modulo a suitable character of the class group studying the Hecke eigensystems amounts to studying them on each component. Since Hida's codimension one results are established also for $B\neq M_2(F)$ and Corollary \ref{cor:charpow} remains valid one may again apply the rigidity results of Proposition \ref{prop:twovark} to each of the finitely many components of the locally symmetric space and establish a more general result along the lines of Theorem \ref{thm:mainalgebraic}. 

\end{remark}
We deal with the set of special points of Proposition \ref{prop:twovark} as follows: assume the weights $k$ in $\Sigma$ are becoming arbitrarily small $p$-adically. Then the points $((1+p)^{k}\zeta-1,(1+p)^{k}\zeta'-1))$ are converging $p$-adically toward $(\zeta-1,\zeta'-1)$, which are precisely the coordinates of torsion points on the formal multiplicative group $\widehat{\IG}_m^2$. For these formal multiplicative groups, we have shown a $p$-adic formal analogue of the multiplicative Manin--Mumford statement (see e.g., \cite{MR0190146}) holds, implying that subschemes approaching infinitely many torsion points must contain a translate of a formal subtorus. More precisely, we use the following special case of \cite[Theorem 1.3]{Serban:2016aa}:
\begin{prop}\label{prop:mmm}
Write $A=\calO[[X,Y]]/(\phi)$ for some nonzero $\phi\in \calO[[X,Y]]$. There exists in $\Spf(A)$ a finite union $\calT$ of translates of proper formal subtori by torsion points of $\widehat{\IG}_m^2$ and a constant $C_\phi>0$ depending only on $\phi$ and the choice of $p$-adic absolute value such that
$$\vert \phi(\zeta-1,\zeta'-1)\vert_p>C_\phi$$
provided $(\zeta-1,\zeta'-1)\in \widehat{\IG}_m^2[p^\infty]\setminus(\calT(\overline{\IQ}_p)\cap\widehat{\IG}_m^2[p^\infty])$. 
\end{prop}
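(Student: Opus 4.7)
My approach combines two-variable Weierstrass preparation with a rigidity argument for torsion points on the formal multiplicative group. Since $\calO[[X,Y]]$ is a UFD and $|\cdot|_p$ is multiplicative, I first reduce to the case that $\phi$ is irreducible by treating each irreducible factor separately and taking the union of the resulting exceptional sets. The proper formal subtori of $\widehat{\IG}_m^2$ are cut out by equations $(X+1)^a(Y+1)^b=1$ for primitive $(a,b)\in\IZ_p^2$, and their torsion translates by equations $(X+1)^a(Y+1)^b = \xi$ for $\xi\in\mu_{p^\infty}$. If $\phi$ is a unit multiple of such an equation the proposition is trivial; otherwise, irreducibility forces $\phi$ to vanish identically on at most finitely many such translates, and these form $\calT$.

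Next, after a possible linear change of variables ensuring $Y$-regularity, Weierstrass preparation in $\calO[[X]][[Y]]$ yields $\phi = p^m\, U\cdot P$ with $U\in\calO[[X,Y]]^\times$ and $P\in \calO[[X]][Y]$ distinguished in $Y$ of some degree $d$. For each torsion $\zeta\in \mu_{p^\infty}$ the fiber polynomial $P(\zeta-1,Y)$ is a monic polynomial over $\calO_{\IQ_p(\zeta)}$ with roots $\eta_1(\zeta),\ldots,\eta_d(\zeta)\in \overline{\IQ}_p$. Since $|U|_p = 1$ and the leading coefficient of $P$ is a unit, the problem reduces to establishing
$$\min_{1\leq i\leq d}\; \bigl|\zeta'-1 - \eta_i(\zeta)\bigr|_p \;\geq\; \delta_\phi \;>\; 0$$
uniformly over torsion pairs $(\zeta-1,\zeta'-1)$ lying outside $\calT$; then $C_\phi := |p|_p^m\, \delta_\phi^d$ works.

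The heart of the proof, and the main obstacle, is this last uniform lower bound. The key Galois-theoretic input is that $\phi$ has coefficients in $\calO\subseteq\IQ_p$, so the $p$-adic absolute value is invariant under $\Gal(\overline{\IQ}_p/\IQ_p)$; consequently, a small distance $|\zeta'-1-\eta_i(\zeta)|_p$ propagates to every Galois conjugate $|\sigma(\zeta')-1-\eta_{i(\sigma)}(\sigma(\zeta))|_p$. Because torsion points of order $p^n$ in $\widehat{\IG}_m$ form a single Galois orbit of size $p^{n-1}(p-1)$, an arbitrarily small value would produce a whole growing orbit of torsion points crowded around the algebraic branch $\eta_i$; in the limit, the graph of $\eta_i$ would have to coincide with a torsion translate of a formal subtorus, contradicting the standing hypothesis that $\phi$ does not define such a translate. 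Making this qualitative contradiction effective — comparing the natural $p$-adic scale $p^{-1/(p^{n-1}(p-1))}$ on torsion of order $p^n$ against the rate at which the branches $\eta_i$ vary — is where the quantitative formal Manin--Mumford estimate is really needed, and is the principal technical challenge of the proof.
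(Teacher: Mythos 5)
The paper does not prove this proposition at all: it is imported verbatim as a special case of \cite[Theorem 1.3]{Serban:2016aa}, the author's earlier $p$-adic formal Manin--Mumford result. So the relevant question is whether your blind attempt actually constitutes a proof of that imported theorem, and it does not. Your reductions (splitting into irreducible factors using that $\calO[[X,Y]]$ is a UFD and $|\cdot|_p$ is multiplicative, Weierstrass preparation, reducing to a uniform separation between $\zeta'-1$ and the roots $\eta_i(\zeta)$ of the fiber polynomial) are a sensible skeleton, and the Galois-orbit mechanism you describe --- that a torsion pair too close to $V(\phi)$ drags its entire $\Gal(\overline{\IQ}_p/\IQ_p)$-orbit $\{(\zeta^u,(\zeta')^u):u\in\IZ_p^\times\}$, of size $p^{n-1}(p-1)$, close to $V(\phi)$, eventually forcing a torsion translate of a subtorus inside $V(\phi)$ --- is genuinely the engine of the actual proof. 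But you then state explicitly that making this effective ``is where the quantitative formal Manin--Mumford estimate is really needed, and is the principal technical challenge of the proof.'' That estimate \emph{is} the proposition. What remains unaddressed is precisely the content: one must (i) show that the torsion translates of proper subtori contained in (or accumulated by torsion points near) $V(\phi)$ are finite in number, so that $\calT$ is well defined, and (ii) produce the uniform constant $C_\phi$ by playing the growth of the Galois orbit against the degree $d$ of the Weierstrass polynomial and the scale $|\zeta_{p^n}-1|_p=p^{-1/(p^{n-1}(p-1))}$. Neither is carried out, so the argument as written is a plan, not a proof.

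A secondary but real problem: to apply Weierstrass preparation you invoke ``a possible linear change of variables ensuring $Y$-regularity.'' An additive linear substitution in $(X,Y)$ does not respect the group structure of $\widehat{\IG}_m^2$; it sends torsion points $(\zeta-1,\zeta'-1)$ to points that are no longer of this form, so the reduction to root separation along torsion fibers breaks. The admissible changes of coordinates here are automorphisms of the formal torus, i.e.\ monomial substitutions $(x,y)\mapsto(x^ay^b,x^cy^d)$ with $ad-bc\in\IZ_p^\times$, which do preserve $p$-power torsion; you would need to argue that such a substitution can always be chosen to make $\phi$ regular in $Y$, or handle the non-regular case (e.g.\ $\phi=X$) separately. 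This is fixable, but the central quantitative step is not, on the basis of what you have written.
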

We therefore deduce:
 \begin{lemma}\label{lemma:ktozero}
 Let $\phi\in \calO[[X,Y]]$ be a power series. Suppose we have the vanishing 
$$\phi((1+p)^{k_j}\zeta_j-1,(1+p)^{k_j}\zeta_j'-1)=0$$
for a sequence of $p$-adic integers $\{k_j\}_{j\in\IN}$ converging to $0$ in the $p$-adic metric and for a sequence of pairs of $p$-power roots of unity $\{(\zeta_j,\zeta_j')\}_{j\in\IN}$ such that the set $\{(k_j,\zeta_j,\zeta_j')\vert j\in\IN\}\subset \IZ_p\times \mu_{p^\infty}^2$ is infinite. Then $\phi$ is divisible by $\xi(X+1)^N-(Y+1)$ over $\calO[\xi]$ for some fixed $\xi\in\mu_{p^\infty}$ and $N\in\IZ_p$, up to possibly swapping coordinates. Moreover, if $k_j\neq 0$ infinitely often and $\phi$ is irreducible passing through the origin, then one must have $N=\xi=1$ and $\phi$ vanishes along the diagonal.
 \end{lemma}
 \begin{proof} 
 First observe that we may assume that the set $\{(\zeta_j,\zeta_j')\vert j\in\IN\}$ is infinite. Indeed, otherwise we obtain the vanishing $\phi((1+p)^{k_j}\zeta-1,(1+p)^{k_j}\zeta\xi-1)=0$ for infinitely many $k_j$ and fixed $\zeta,\xi\in\mu_{p^\infty}$ (up to coordinate swap). But this infinite set of zeroes is by Weierstrass preparation Zariski-dense in $\Spec(\calO[\xi][[X,Y]])/(\xi(X+1)-(Y+1)))$, and the result follows. \\
 Assume now for nonzero $\phi\in \calO[[X,Y]]$ that the set $\{(\zeta_j,\zeta_j')\vert j\in\IN\}$ is infinite and that $C_\phi$ is as in Proposition \ref{prop:mmm}. Since $\vert (1+p)^{k_j}\zeta_j-\zeta_j\vert_p\leq p^{-v_p(k_j)}$, we may by continuity of $\phi$ assume $j\in \IN$ is large enough so that $\vert \phi(\zeta_j-1,\zeta_j'-1)\vert_p\leq C_\phi$. By Proposition \ref{prop:mmm} it must therefore be that $\Spf (\calO[[X,Y]]/\phi)$ contains a translate by a torsion point of a (strictly) positive-dimensional formal subtorus. In particular, Proposition \ref{prop:mmm} implies there are fixed $N\in\IZ_p$ and $(\xi_1-1, \xi_2-1)\in  \widehat{\IG}_m^2[p^\infty]$ such that  for all $\zeta\in \mu_{p^\infty}$ we have that
 $$\phi(\xi_1\zeta-1, \xi_2\zeta^N-1)=0,$$ 
after possibly switching the roles of $X$ and $Y$. We then set $\xi:=\xi_1^{-N}\xi_2$ and may write 
 $$\phi(X,Y)=H(X)+(\xi(X+1)^N-(Y+1))G(X,Y)$$
 for $H(X)=\phi(X,\xi(X+1)^N-1)\in \calO[\xi][[X]]$ and $G(X,Y)\in \calO[\xi][[X,Y]]$. Since $H(\xi_1\zeta-1)=0$ for all $\zeta\in \mu_{p^\infty}$, we must have that $H(X)=0$ by Weierstrass preparation. This proves the first part of the claim. \\
 For the second part, we take the product over all $G_\xi:=\Gal(\calO[\frac{1}{p}][\xi]/\calO[\frac{1}{p}])$-conjugates of the equality $\phi(X,Y)=(\xi(X+1)^N-(Y+1))G(X,Y)$. We may in this equality factor $G(X,Y)=P(X,Y)\cdot Q(X,Y)$ over $\calO[\xi]$ with $P,Q$ chosen so that the prime factors of $P(X,Y)$ consist exactly of all the conjugates $(\sigma(\xi)(X+1)^N-(Y+1))$ for $\sigma\in G_\xi$ that divide $G(X,Y)$. We note that we often use here that our power series rings are unique factorization domains. Taking now the product of $G_\xi$-conjugates, by irreducibility $\phi$ must either divide $\prod_{\sigma\in G_\xi}\sigma\left((\xi(X+1)^N-(Y+1))\cdot P(X,Y)\right)\in\calO[[X,Y]]$ or $\prod_{\sigma\in G_\xi}\sigma\left(Q(X,Y)\right)\in\calO[[X,Y]]$. But the latter is impossible by our choice of $Q(X,Y)$ as none of the Galois conjugates can account for the factor $\xi(X+1)^N-(Y+1)$ of $\phi$ over $\calO[\xi]$. The product $\prod_{\sigma\in G_\xi}\sigma\left((\xi(X+1)^N-(Y+1))\cdot P(X,Y)\right)$ being by definition of $P$ just a number of copies of $\prod_{\sigma\in G_\xi}(\sigma(\xi)(X+1)^N-(Y+1))$ up to units, we deduce that $\phi$ divides $\prod_{\sigma\in G_\xi}(\sigma(\xi)(X+1)^N-(Y+1))\in\calO[[X,Y]]$. 
 Finally, provided $k_j\neq 0$, a factor of $\phi$ of the form $\xi(X+1)^N-(Y+1)$ cannot vanish at $((1+p)^{k_j}\zeta_j-1,(1+p)^{k_j}\zeta_j'-1)$ unless $N=1$, when that factor does not pass through the origin unless $\xi=1$. The result follows.
 \end{proof}
 
 \begin{proof}[Proof of Proposition \ref{prop:twovark}]
We consider for $K\in \IZ_p$ the translate of $\phi$ given by the power series
$$\Phi_K(X,Y):=\phi((1+p)^K(X+1)-1, (1+p)^K(Y+1)-1).$$
If $\Sigma$ contains infinitely many weights $k$, by compactness there is an infinite subsequence $\{k_j\}_{j\in\IN}$ converging $p$-adically to some $K\in \IZ_p$. If finitely many weights occur in $\Sigma$, there exists similarly $K\in\IZ_p$ such that $\Phi_K(\zeta-1,\zeta'-1)=0$ for infinitely many pairs $(\zeta,\zeta')\in \mu_{p^\infty}^2$.\\
 In both cases Lemma \ref{lemma:ktozero} implies that $\Phi_K$ must be, after possibly switching the roles of $X$ and $Y$, divisible over $\calO[\xi]$ by $\xi(X+1)^N-(Y+1)$ for some $\xi\in\mu_{p^\infty}$ and $N\in \IZ_p$.  In particular, for all $\zeta \in  \mu_{p^\infty}$ we have the claimed vanishing 
$$\phi((1+p)^K\zeta-1,(1+p)^K\xi\zeta^N-1)=0.$$

To prove the second part, observe that up to units and swapping $X$ and $Y$, there is therefore an irreducible component of the original power series $\phi$ having infinitely many zeroes in common with the power series
$$\kappa_{N,K,\xi}(X,Y):=(1+p)^{-KN+K}\xi (X+1)^N-(Y+1).$$
This infinite set of zeroes is Zariski-dense in $\Spec(\calO[\xi][[X,Y]]/(\kappa_{N,K,\xi}))$ since the underlying ring $\calO[\xi][[X,Y]]/(\kappa_{N,K,\xi})$ is isomorphic to a one-variable power series ring over $\calO[\xi]$ and thus an infinite set of points is dense by Weierstrass preparation. It follows that $\phi$ must be divisible by $\kappa_{N,K,\xi}$ over $\calO[\xi]$. Taking again $G_\xi:=\Gal(\calO[\frac{1}{p}][\xi]/\calO[\frac{1}{p}])$-conjugates we see that the irreducible power series $\phi$ is divisible by $\prod_{\sigma\in G_\xi}(\kappa_{N,K,\sigma(\xi)})\in\calO[[X,Y]]$.
This establishes the second part of the proposition, since any $\kappa_{N,K,\sigma(\xi)}$ only passes through the origin if $\sigma(\xi)=1$ and either $N=1$ or $K=0$, implying that up to a unit and coordinate swap $\phi$ is of the form $(X+1)^N-(Y+1)$ for some $N\in \IZ_p$.
\end{proof}

\section{Explicit computations}\label{sec:computations}

We now analyze some explicit examples of Hida families with the aim of proving, when this is true, that the family interpolates only finitely many classical automorphic forms. Our approach is based on Theorem \ref{thm:mainalgebraic}, which allows one to verify via a finite amount of computation that a component of the ordinary Hecke algebra $\IT$ has finitely many classical points. More precisely, Theorem \ref{thm:mainalgebraic} yields the following corollary:
\begin{cor}\label{cor:checktwists}
Let $\IT$ denote the ordinary $p$-adic Hecke algebra of tame level $N$
 for some split prime $p$. Assume a component of $\IT$ passing through an ordinary cuspidal newform $\varphi$ of trivial weight and level dividing $\Gamma_0(Np)$ contains infinitely many classical points. Then either its support in $\Lambda$ contains the diagonal, or for every $i\geq 1$ the Hida family interpolates an additional ordinary cusp form, new at level dividing $\Gamma_0(N)\cap\Gamma_1(p^{i+1})$. 
 \end{cor}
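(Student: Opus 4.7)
The plan is to deduce the corollary directly from Theorem \ref{thm:mainalgebraic} applied to the component $D$ of $\Spec \IT$ through $\Pi$, using only a little extra analysis of the nebentypus data. Since $D$ is assumed to pass through a classical ordinary form and to carry infinitely many classical specializations, alternative (2) of Theorem \ref{thm:mainalgebraic} is immediately eliminated. Alternative (1) is precisely the ``contains the diagonal'' option in the statement of the corollary, so the remaining task is to show that alternative (3) already yields, for every $i\in\IN$, a classical specialization that is new at the prescribed level $\Gamma_0(N)\cap\Gamma_1(p^{i+1})$.

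In case (3), after normalizing so that $\Pi$ sits at $(X,Y)=(0,0)$, Theorem \ref{thm:mainalgebraic} produces (up to swapping coordinates) an integer $N_0\in\IZ_p$ such that the points $P_{0,\zeta,\zeta^{N_0}}$ lie on $D$ for every $\zeta\in\mu_{p^\infty}$. Using the splitting $p=\mathfrak{p}\bar{\mathfrak{p}}$, under which $\calO_p^\times\cong \IZ_p^\times\times \IZ_p^\times$, the arithmetic character attached to $P_{0,\zeta,\zeta^{N_0}}$ corresponds to weight zero (hence the same weight as $\Pi$) and to a $p$-power nebentypus whose local components at $\mathfrak{p}$ and $\bar{\mathfrak{p}}$ are determined by $\zeta$ and $\zeta^{N_0}$. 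For a fixed $i\in\IN$, I would choose $\zeta\in\mu_{p^\infty}$ of order exactly $p^{i+1}$. Then the local nebentypus at $\mathfrak{p}$ has conductor exactly $\mathfrak{p}^{i+1}$, so the $p$-part of the conductor of the associated cusp form is $p^{i+1}$. By Corollary \ref{cor:parallel} (specialization gives a classical form at any arithmetic point of parallel weight) together with the standard local newvector theory for $\GL(2)$, this specialization is then new at level $\Gamma_0(N)\cap\Gamma_1(p^{i+1})$.

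The step I expect to be the main obstacle is verifying that the specialization really is \emph{new} at level $\Gamma_0(N)\cap\Gamma_1(p^{i+1})$, rather than merely living in that cohomology as an oldform coming from a smaller $p$-level. The argument I have in mind is a conductor comparison: any oldform at $\Gamma_0(N)\cap\Gamma_1(p^{i+1})$ must have nebentypus whose conductor at $\mathfrak{p}$ divides $\mathfrak{p}^{i}$, which contradicts our choice of $\zeta$ of order $p^{i+1}$. One also needs to ensure that the tame level stays equal to $N$, but this follows from the fact that $p$-adic families preserve the tame level data, together with the assumption that $\Pi$ itself is new at tame level dividing $N$. Once these two observations are in place, the three alternatives of Theorem \ref{thm:mainalgebraic} match exactly the two alternatives in the conclusion of the corollary.
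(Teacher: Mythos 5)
Your proposal follows essentially the same route as the paper's one-line proof: apply Theorem \ref{thm:mainalgebraic} to the component $D$ through $\Pi$, discard case (2) by hypothesis, identify case (1) with the diagonal alternative, and in case (3) specialize at the points $P_{0,\zeta,\zeta^{N_0}}$ produced by the translate of a formal subtorus. The paper compresses this to ``the result follows by specializing at those points.''

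Two details in your write-up are imprecise, though neither affects the structure of the argument. First, the claim that an oldform at $\Gamma_0(N)\cap\Gamma_1(p^{i+1})$ must have nebentypus conductor at $\mathfrak{p}$ dividing $\mathfrak{p}^{i}$ is false: a form new at $\Gamma_0(N)\cap\Gamma_0(\bar{\mathfrak{p}})\cap\Gamma_1(\mathfrak{p}^{i+1})$ is an oldform at $\Gamma_0(N)\cap\Gamma_1(p^{i+1})$ whose nebentypus at $\mathfrak{p}$ still has conductor $\mathfrak{p}^{i+1}$. What the corollary (and the Strategy built on it) actually require is that for each $i$ the family interpolates a classical specialization that is new at \emph{some} level dividing $\Gamma_0(N)\cap\Gamma_1(p^{i+1})$ whose $\mathfrak{p}$- or $\bar{\mathfrak{p}}$-part is exactly $p^{i+1}$; this follows directly from Lemma \ref{lemma:ordinary} once $\zeta$ is chosen of exact order $p^{i+1}$, with no separate oldform-conductor comparison needed. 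Second, the parenthetical paraphrase of Corollary \ref{cor:parallel} is backwards: that corollary says the specialization at $P_{k,k',\varepsilon}$ is \emph{finite} when $k\neq k'$, ruling out classical forms at non-parallel weight; the fact that arithmetic points of parallel weight on the support do give classical ordinary forms is the content of the control theorem (Theorem \ref{prop:allweightsIQF}), and in case (3) the existence of infinitely many such classical specializations at weight $k_\varphi$ is already part of the hypothesis, so no appeal to Corollary \ref{cor:parallel} is needed. The remark on tame level is likewise superfluous, since the corollary only asks for specializations at levels dividing $\Gamma_0(N)\cap\Gamma_1(p^{i+1})$.
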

\begin{proof}
Everything follows from Theorem \ref{thm:mainalgebraic}, noting that the characteristic power series $f$ of $\IT$ in $\Lambda$, after normalizing so that $\varphi$ is above the origin in $\Lambda$ and $f(0,0)=0$, has to vanish along a subtorus which we assume is not the diagonal. Up to swapping coordinates we obtain that $f(\zeta-1,\zeta^M-1)=0$ for all $\zeta\in\mu_{p^\infty}(\overline{\IQ}_p)$ and for some fixed $M\in \IZ_p$. This gives rise to primes $P_{0,\zeta,\zeta^M}:=(X+1-\zeta, Y+1-\zeta^M)\Lambda$ for all $\zeta\in\mu_{p^\infty}$ on the support of the Hida family and it is easily seen that regardless of the $p$-adic valuation of $M$, the order of the pairs in $\{(\zeta, \zeta^M)\in\mu_{p^\infty}^2\}$ runs through all of $p^i$ for $i \geq 1$ . Thus we obtain for every $i\geq 1$ a finite order character $\varepsilon$ of $\calO_p^\times$ of conductor $p^i$ such that 
$$P_{0,\zeta,\zeta^M}=(X+1-\varepsilon^{+}(1+p), Y+1-\varepsilon^{-}(1+p))\Lambda$$
is on the support of the Hida family in $\Lambda$. Now apply the specialization map \eqref{map:sp.}, taking into account our specific normalization. It follows from Hida's control theorem that each of the primes $P_{0,\zeta,\zeta^M}$ on the support of the Hida family interpolates a characteristic zero Hecke eigensystem for the algebra generated by the Hecke operators on $H^1_{P,\ord}(\Delta_1(p^{i+1}),V_{0,0,\varepsilon}(E/\calO))$, with notations as in Section \ref{sec:one}.  In our context we simply take $\Delta_1(p^{i+1})=\Gamma_0(N)\cap \Gamma_1(p^{i+1})$. These eigensystems correspond to classical ordinary cuspidal newforms for each $i\geq 1$, as claimed. \end{proof}
It follows that for our purposes we are led to computing with spaces of cuspidal Bianchi modular forms with nebentypus. Computations of spaces of Bianchi modular forms have a rich history, and were necessary to exploring modularity questions (see e.g., \cite{MR743014}), especially in the absence of many algebro-geometric constructions available for modular forms over $\IQ$. In general, there seems to be a paucity of genuine (not twists of base-change or CM automorphic forms) cuspidal Bianchi modular forms. We refer the reader to \cite{MR3091734,Rahm:2017aa} for some large scale computations along those lines. While this expectation is convenient for our intentions, we are not aware of any systematic computations which also include nebentypi. In practice, given an ordinary cuspidal newform in trivial weight $\varphi$ which lifts to an ordinary family of tame level $N$, Corollary \ref{cor:checktwists} leads to the following strategy of checking there are finitely many classical automorphic forms on the family:
\begin{strategy}
\begin{enumerate}
    \item Compute spaces of non-trivial weight $(k,k)$ and level $\Gamma_0(N)$ to find $k>0$ such that there is no cuspidal cohomology in $H^1(\Gamma_0(N),V_{k,k}(\IC))$. 
    \item Compute the dimensions of the new subspaces of cusp forms in trivial weight and level dividing $\Gamma_0(N)\cap\Gamma_1(p^{i+1})$ for $i=1$. For the ordinary newforms (see the criterion in Lemma \ref{lemma:ordinary}), check via a Hecke eigenvalue computation whether they satisfy the necessary congruences to lie on the family. If none do, we are done. 
    \item If one finds a cuspidal newform in the family at level dividing $\Gamma_0(N)\cap\Gamma_1(p^{i+1})$ for $i=1$, repeat the previous step for $i=2,3,...$ until there is no new cuspform to be found at that level lying on the family. 
\end{enumerate}
\end{strategy}

We carry through this strategy for concrete examples, starting with some useful preliminary results. 

\begin{lemma}\label{lemma:ordinary}
Let $\Pi=\bigotimes'_\nu \pi_\nu $ be a cuspidal automorphic representation for $\GL(2)/F$ of conductor $N$ such that the archimedean component $\pi_\infty$ contributes to the cohomology with coefficients in the weight $(k,k)$ representation $V_{k,k}$ for some $k\geq 0$. Let $\pi:=\pi_\ip$, for a prime $\ip$ above an odd rational prime $p$, and suppose $\dim \pi^{\Gamma_1(p^n)}=1$ for some $n\geq 1$. If the corresponding cohomology class $[c]$ in $H^1(\Gamma_1(N), V_{k,k}(\IC))$ is ordinary at $\ip$ then one of the following holds:
\begin{enumerate}
\item The conductor of $\pi$ is $p^n$ and the Nebentypus character of $[c]$ is non-trivial of order exactly divisible by $\ip^{n}$. 
\item The weight $k=0$, $n=1$ and $\pi$ is an unramified twist of the Steinberg representation. If $\dim \pi^{\Gamma_0(p)}=1$ then the twist is unramified quadratic.
\end{enumerate}

\end{lemma}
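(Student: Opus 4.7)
The lemma is a purely local statement at $\ip$, so the plan is to analyze $\pi = \pi_\ip$ as an irreducible admissible representation of $\GL_2(F_\ip) \cong \GL_2(\IQ_p)$ (using that $p$ splits in $F$) and determine when the $U_\ip$-eigenvalue on the unique $\Gamma_1(\ip^n)$-fixed line is a $p$-adic unit in the cohomological normalization associated to $V_{k,k}$. First I would invoke Casselman's theory of the local newform: the hypothesis $\dim \pi^{\Gamma_1(p^n)} = 1$ is equivalent to $\pi$ having conductor exponent exactly $n$, which already yields the conductor assertion in (1). The local Langlands classification then leaves three possibilities: $\pi$ is supercuspidal, a Steinberg twist $\mathrm{St} \otimes \chi$, or an irreducible principal series $\pi(\chi_1, \chi_2)$.

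The heart of the argument is then a case-by-case computation of the $U_\ip$-action on the new vector. Supercuspidals and principal series $\pi(\chi_1, \chi_2)$ with both $\chi_i$ ramified both have vanishing $U_\ip$-eigenvalue, and so are incompatible with ordinarity; the same is true for $\mathrm{St} \otimes \chi$ with $\chi$ ramified, whose conductor $\ip^{2\,\mathrm{cond}(\chi)}$ is at least $\ip^2$. Two families remain. In the first, $\pi = \pi(\chi_1, \chi_2)$ with (say) $\chi_1$ unramified and $\chi_2$ of conductor $\ip^n$; here the $U_\ip$-eigenvalue is a unit multiple of $\chi_1(\varpi_\ip) p^{k/2}$, a $p$-adic unit, and the nebentype at $\ip$ equals $\chi_2|_{\calO_\ip^\times}$, of conductor exactly $\ip^n$ --- this is conclusion (1). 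In the second, $\pi = \mathrm{St} \otimes \chi$ with $\chi$ unramified, forcing $n = 1$; the $U_\ip$-eigenvalue is a unit multiple of $\chi(\varpi_\ip) p^{k/2}$, which is a $p$-adic unit iff $k = 0$, yielding conclusion (2).

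For the additional claim in (2), the hypothesis $\dim \pi^{\Gamma_0(\ip)} = 1$ implies that the $\Gamma_1(\ip)$-fixed line is preserved by the full Iwahori, so the action of $\Gamma_0(\ip)/\Gamma_1(\ip) \cong (\calO_\ip/\ip)^\times$ on it is trivial; combined with the fact that this action is determined by the central character $\chi^2$ of $\mathrm{St} \otimes \chi$ and the nebentype datum of the class, a short computation forces $\chi^2 = 1$, so $\chi$ is unramified quadratic. The main technical obstacle is pinning down the precise $U_\ip$-eigenvalue formulas in the cohomological $V_{k,k}$ normalization and verifying the claimed vanishing in the non-ordinary cases; these follow from the standard double-coset computation for $U_\ip$ and the explicit integral structure on the new vector, but require careful bookkeeping of the power of $p$ appearing in each case in order to correctly identify which branch is ordinary.
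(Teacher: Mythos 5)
Your proposal is correct and follows essentially the same route as the paper's proof: both use Casselman's newvector theory to identify the conductor, then run through the local Langlands classification (supercuspidal, Steinberg twist, principal series) and determine in each case whether the $U_\ip$-eigenvalue on the newvector can be a $p$-adic unit, landing on exactly the same two surviving cases. The only cosmetic difference is that the paper phrases the vanishing/nonvanishing of the $U_\ip$-eigenvalue via the inertia invariants of the associated Weil--Deligne representation rather than via explicit eigenvalue formulas, and you should be a bit careful that your formula $\chi_1(\varpi_\ip)p^{k/2}$ in the principal series case is only a unit because ordinarity is being \emph{assumed}, whereas in the Steinberg case $\chi(\varpi_\ip)$ is forced to be a root of unity, so the $p^{k/2}$ factor genuinely obstructs ordinarity unless $k=0$.
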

\begin{proof}
This follows from matching up the different classes of irreducible admissible representations $\pi$ of $\GL_2(\IQ_p)$ with their $U_p$-eigenvalue. Recall that for such representations there is a conductor $c(\pi)$ such that 
$$\dim \pi^{U_1(p^m)}=\max\{0, m-c(\pi)+1\}$$
for all $m\geq 0$. It follows that in our situation we must have $c(\pi)=n$. Since $p>2$ and $\pi$ is infinite dimensional,
$\pi$ is either a principal series representation $\pi(\chi_1,\chi_2)$, a twist of the Steinberg representation $S(\chi)$ or a supercuspidal representation. The supercuspidal case can be excluded since the $U_p$-eigenvalue is trivial in that case (see e.g., \cite{Loeffler:2010aa}). Moreover, the conductors are given by $c(\pi(\chi_1,\chi_2))=c(\chi_1)+c(\chi_2)$ and $c(S(\chi))=\max\{1, 2c(\chi)\}$. If the conductor  $c(\pi)>1$, $\pi$ has non-zero $U_p$-eigenvalue if and only if the associated Weil-Deligne representation $WD(\pi)$ has a non-trivial subspace fixed by inertia (see e.g., \cite[Lemma 4.2.2]{MR1639612}). In the principal series case $WD(\pi)=\chi_1\oplus \chi_2$ and this implies exactly one of the characters, say $\chi_1$, is unramified. The $U_p$-eigenvalue on that space is then just $\chi_1(\Frob_p)$. Letting now $\omega_p$ denote the $p$-part of the central character of $\Pi$, in the principal series case we have that $\omega_p=\chi_1\cdot\chi_2$ for some appropriate normalization. Since $\chi_1$ is unramified and $\chi_2$ has conductor $n$, the same holds for $\omega_p$ and the nebentypus has to have order divisible by $\ip^n$. In the special representation case, $WD(\pi)=\chi\oplus \chi\vert-\vert^{-1}$ (and non-trivial monodromy) and thus the $U_p$-eigenvalue is non-trivial if and only if $\chi$ is unramified, which doesn't happen if $c(\pi)>1$. This leaves the case of $c(\pi)=1$. For principal series, again the two conductors are $c(\chi_1)=0$ and $c(\chi_2)=1$ and we conclude as for $c(\pi)>1$. Finally in the unramified twist of the Steinberg representation case, a computation shows that the $U_p$-eigenvalue is a $p$-adic unit (actually $\chi(p)$) if and only if $k=0$; moreover the $p$-part of the central character is given by $\chi^2$, and the result follows. 
\end{proof}

To compute the number of cuspidal newforms, we subtract the dimensions of the space of Eisenstein series and of oldforms from the computed dimensions of the relevant cohomology groups. Denote by $M_{k,k}(\Gamma_1(N),\varepsilon)\cong H^1(\Gamma_1(N), V_{k,k}(\IC))$ and $S_{k,k}(\Gamma_1(N),\varepsilon)$ the finite dimensional $\IC$-vector spaces of parallel weight $k$ forms and cusp forms with character $\varepsilon$. We have the following:
\begin{prop}\label{prop:eisold}
Let $F$ be imaginary quadratic of class number one, $\Gamma_1(N)\leq \SL_2(\calO_F)$ the usual congruence subgroup for an ideal $N\subset \calO_F$, and $\varepsilon:(\calO_F/N)^\times\to \IC^\times$ a Dirichlet character modulo $N$ such that $\varepsilon(-1)=(-1)^k$. Then:
\begin{enumerate}
\item The dimension of the space of Eisenstein series of level $\Gamma_1(N)$, weight $(k,k)$ and Nebentypus $\varepsilon$ equals 
\begin{align*}\nu_\infty(\varepsilon)
&=-\delta_{k,2}\cdot\delta_{\mathfrak{f}(\varepsilon),1}+\sum_{\substack{d\vert N\\ (d,N/d)\vert N/\mathfrak{f}(\varepsilon)}}\vert (\calO_F/(d,N/d))^\times\vert\\
&=-\delta_{k,2}\cdot\delta_{\mathfrak{f}(\varepsilon),1}+\prod_{\substack{\ip\text{ prime}\\{\ip\vert N}}}\lambda(v_\ip(N),v_\ip(\mathfrak{f}(\varepsilon)),\calN(\ip)),
\end{align*}
where $\mathfrak{f}(\varepsilon)$ denotes the conductor, $\calN$ the norm and for $s\geq 0$, $r\geq\max\{s,1\}$ we set:
$$\lambda(r,s,l)=\begin{cases}l^{r'}+l^{r'-1}& 2s\leq r=2r'\\
2l^{r'}& 2s\leq r=2r'+1\\
2l^{r-s}&2s>r.\\
\end{cases}$$
\item 
There is a maximal Hecke-invariant subspace $S_{k,k}(\Gamma_1(N),\varepsilon)_\text{new}$ on which the Hecke algebra acts semisimply. If $M\vert N$, there are degeneracy maps $\alpha_d:S_{k,k}(\Gamma_1(M))\to S_{k,k}(\Gamma_1(N))$ for every divisor $d$ of $N/M$ such that 
$$S_{k,k}(N,\varepsilon)=\bigoplus_{\substack{M\vert N\\ {\mathfrak{f}(\varepsilon)\vert M}}}\bigoplus_{d\vert (N/M)}\alpha_d(S_{k,k}(\Gamma_1(M),\tilde{\varepsilon})_{\text{new}}),$$
where $\tilde{\varepsilon}$ is the restriction of $\varepsilon$ to $\Gamma_1(M)$. 

\end{enumerate}
\end{prop}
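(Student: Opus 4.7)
For part (1), the plan is to realize the Eisenstein classes via the boundary of the Borel--Serre compactification. The long exact sequence of the pair $(Y(\Gamma_1(N)),\partial Y(\Gamma_1(N)))$ identifies the Eisenstein quotient $M_{k,k}(\Gamma_1(N),\varepsilon)/S_{k,k}(\Gamma_1(N),\varepsilon)$ with the image of the restriction map into $H^1(\partial Y(\Gamma_1(N)),\underline{V_{k,k}(\mathbb{C})})_\varepsilon$. The boundary decomposes as a disjoint union of $2$-tori $T_c=\Lambda_c\backslash\mathbb{C}$, one for each $\Gamma_1(N)$-cusp $c$, where $\Lambda_c$ is the rank-two lattice inside the unipotent radical stabilizing $c$. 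A direct group cohomology computation on each torus shows that the $\varepsilon$-eigenspace of $H^1(T_c,\underline{V_{k,k}(\mathbb{C})})$ is one-dimensional precisely when $\varepsilon$ restricts trivially to $\Lambda_c$ and vanishes otherwise, so that $\nu_\infty(\varepsilon)$ equals the number of cusps on which $\varepsilon$ is trivial, up to the kernel of $H^1(\partial Y)\to H^2_c$.

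Next I would count such cusps. Since $F$ has class number one, $\Gamma_1(N)$-equivalence classes of cusps are parametrized by $(\mathcal{O}_F/N)^\times$-orbits on $\mathbb{P}^1(\mathcal{O}_F/N)$. The Chinese remainder decomposition splits this count as a product over primes $\mathfrak{p}\mid N$, so it reduces to the local computation depending only on $r=v_\mathfrak{p}(N)$ and $s=v_\mathfrak{p}(\mathfrak{f}(\varepsilon))$: classify $(\mathcal{O}_F/\mathfrak{p}^r)^\times$-orbits on $\mathbb{P}^1(\mathcal{O}_F/\mathfrak{p}^r)$ through representatives $(\alpha:\beta)$ with $v_\mathfrak{p}(\alpha)=a$ and $v_\mathfrak{p}(\beta)=b$ minimal, and sort by whether $2s\leq r$ or $2s>r$ to recover the three cases of $\lambda(r,s,l)$. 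The global correction $-\delta_{k,2}\cdot\delta_{\mathfrak{f}(\varepsilon),1}$ arises when the coefficient system has nonzero $\Gamma_1(N)$-invariants compatible with $\varepsilon$: for $k=2$ and trivial $\varepsilon$, the sum of boundary classes attached to the cusps lies in the image of $H^1_c$, cutting the Eisenstein dimension by one.

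For part (2), the plan is to adapt the Atkin--Lehner--Casselman theory to $\mathrm{GL}_2(\mathbb{A}_F)$. Every cuspidal Hecke eigenform in $S_{k,k}(\Gamma_1(N),\varepsilon)$ arises from a cuspidal automorphic representation $\pi$ of conductor $\mathfrak{f}(\pi)\mid N$ whose central character extends $\varepsilon$; Casselman's local newform theorem provides, at every finite place, a canonical line of new vectors together with the dimension formula $\dim \pi_\mathfrak{p}^{U_1(\mathfrak{p}^m)}=\max\{0,m-c(\pi_\mathfrak{p})+1\}$ already invoked in Lemma \ref{lemma:ordinary}. Defining $S_{k,k}(\Gamma_1(M),\tilde\varepsilon)_{\mathrm{new}}$ as the span of forms attached to representations of exact conductor $M$, Piatetski--Shapiro's strong multiplicity one distinguishes such representations via their Hecke eigenvalues outside $N$, giving both the semisimplicity of the Hecke action and the Hecke-equivariant direct sum decomposition. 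The degeneracy maps $\alpha_d$ for $d\mid N/M$ are induced by the double coset operators $\Gamma_1(N)\bigl(\begin{smallmatrix}d & 0\\0 & 1\end{smallmatrix}\bigr)\Gamma_1(M)$ (or adelically by right translation by the corresponding element), and the local dimension formula shows their images exhaust the oldform subspace with no redundancy.

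The main obstacle is the careful local combinatorics producing $\lambda(r,s,l)$ from $(\mathcal{O}_F/\mathfrak{p}^r)^\times$-orbits on $\mathbb{P}^1(\mathcal{O}_F/\mathfrak{p}^r)$ with prescribed stabilizer, together with pinning down the global $k=2$, trivial-character correction via an explicit analysis of the restriction map to boundary cohomology; the newform decomposition is by contrast largely a formal consequence of local representation theory, once the adelic-to-classical dictionary is set up correctly under the class number one assumption.
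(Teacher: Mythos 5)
Your overall architecture for part (1) matches the paper's: realize Eisenstein series through the boundary of the Borel--Serre compactification, count cusps, reduce to a local product over primes, and handle the $k=2$, trivial-conductor correction via the single global boundary relation $\sum_c[\gamma_c]$. Part (2) is likewise routine Atkin--Lehner theory, which the paper just imports from Miyake's generalization of Atkin--Lehner; your adelic sketch is a standard equivalent.

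Where you diverge from the paper's method in part (1) is the middle step. The paper does not try to isolate the $\varepsilon$-eigenspace directly at each boundary torus. Instead it works with the intermediate congruence groups $\Gamma_H(N)$ for $H\subseteq(\calO_F/N)^\times$, computes $|\IP_H(N)|=\sum_{d\mid N}\frac{|(\calO_F/d)^\times|\cdot|(\calO_F/(N/d))^\times|}{|H_d|}$ via the explicit bijection $\gamma\mapsto(c,d)$, and then applies M\"obius inversion over the character group to extract $\dim M_{k,k}(\Gamma_1(N),\varepsilon)$. This packaging sidesteps any case analysis of cusp stabilizers, and the non-trivial part of the paper's argument is a cancellation lemma showing the M\"obius sum vanishes whenever the conductor fails to divide $N_d=\mathrm{lcm}(d,N/d)$. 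Your proposal replaces this with a direct orbit-stabilizer count for the action of $\Gamma_0(N)/\Gamma_1(N)\cong(\calO_F/N)^\times$ on $\Gamma_1(N)$-cusps.

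Two things in your write-up need tightening. First, the condition you state --- that the $\varepsilon$-eigenspace survives ``precisely when $\varepsilon$ restricts trivially to $\Lambda_c$'' --- is not well-posed: $\Lambda_c$ is the rank-two unipotent translation lattice, while $\varepsilon$ is a character of $(\calO_F/N)^\times$, and there is no natural restriction map from one to the other. The correct condition is that $\varepsilon$ is trivial on the image in $(\calO_F/N)^\times$ of the $\Gamma_0(N)$-stabilizer of $c$ (equivalently, the stabilizer of the $\Gamma_1(N)$-cusp $c$ under the diamond action). This is exactly the orbit-stabilizer datum that the paper's M\"obius inversion encodes implicitly through the groups $H_d$. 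Second, $H^1(T_c,\underline{V_{k,k}(\IC)})$ is two-dimensional, not one; the ``one dimension per cusp'' is the rank of its image in $H^1(Y)$, by the half-lives-half-dies consequence of Poincar\'e--Lefschetz duality --- the paper is explicit that it is counting the rank of $\mathrm{im}(H_1(\partial Y)\to H_1(Y))$. Once you correct these two points, your plan is sound, though you (like the paper) defer the final reduction to the explicit local factor $\lambda(r,s,l)$.
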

\begin{proof}
For the first statement, we mostly adapt the proof in \cite{MR2738153} for congruence subgroups of $\SL_2(\IZ)$. We first count the number of cusps for a congruent subgroup $\Gamma\leq\SL_2(\calO_F)$. There is a bijection 
\begin{align*}
\Gamma\backslash\SL_2(\calO_F)/ B_0&\leftrightarrow \{\Gamma\text{-cusps}\}\\
\gamma&\mapsto \gamma B_0\gamma^{-1}
\end{align*}
where $B_0$ is the standard Borel subgroup of upper triangular matrices. If the class number is one, there is one $\SL_2(\calO_F)$-conjugacy class of Borel subgroups. The cosets $\SL_2(\calO_F)/B_0$ can be identified with projective space $\IP_1(F)$ and the number of cusps equals the number of $\Gamma$-orbits on $\IP_1(F)$. We first consider for subgroups $H\subseteq (\calO_F/N)^\times$ the congruence subgroups 
$$\Gamma_H(N)=\{\begin{pmatrix}a&b\\c&d\end{pmatrix}\in \SL_2(\calO_F)\vert c\equiv 0(N) \text{ and }a,d \in H\}$$ 
and denote by $M_{k,k}(\Gamma_H(N))\subseteq M_{k,k}(\Gamma_1(N))$ the corresponding space of forms. Mapping $\gamma=\begin{pmatrix}a&b\\c&d\end{pmatrix}\mapsto (c,d)$ induces a well-defined map from the cosets $\Gamma_H(N)\backslash\SL_2(\calO_F)$ to $\IP_H(N)=\{(c,d)\in (\calO_F/N)^\times\vert (c,d,N)=1\}/\sim$, with the equivalence relation $(c,d)\sim (c',d') \Leftrightarrow c'=hc,d'=hd$ for some $h\in H$. It is easy to check this map is a bijection and therefore the number of cusps for $\Gamma_H(N)$ is 
\begin{align*}\vert \IP_H(N)\vert&=\frac{\vert (\calO_F/N)^\times\vert}{ \vert H\vert}\cdot\vert \IP_1(\calO_F/N)\vert\\
&=\frac{\vert (\calO_F/N)^\times\vert}{ \vert H\vert}\calN(N)\prod_{\ip\vert N}(1+\calN(\ip)^{-1}).
\end{align*}
Moreover, letting $N_d$ denote the least common multiple of $d$ and $N/d$ and letting $H_d$ denote the image modulo $N_d$ of $H$, it follows easily from the definitions that 
\begin{equation}\label{eqn:cusps}\vert \IP_H(N)\vert=\sum_{d\vert N}\frac{\vert(\calO_F/d)^\times\vert\cdot \vert(\calO_F/(N/d))^\times\vert}{\vert H_d\vert}.
\end{equation}
 Let now $\varepsilon:(\calO_F/N)^\times\to \IC^\times$ a Dirichlet character of order $n$, and set $H=\ker(\varepsilon)$. For simplicity first assume that $(\calO_F/N)^\times/H$ is cyclic. Then $M_{k,k}(\Gamma_H(N))=\bigoplus_{m=0}^{n-1}M_{k,k}(\Gamma_1(N),\varepsilon^m)$,
and since the dimensions only depend on the Galois conjugacy class of the Nebentypus, we have 
$\dim M_{k,k}(\Gamma_{\ker(\varepsilon)}(N))=\sum_{\delta \vert n}\varphi(\delta)\dim M_{k,k}(\Gamma_1(N),\varepsilon^{n/\delta})$.
Applying M\"obius inversion yields the formula
\begin{equation}\label{eqn:nebentypuscyclic}\dim M_{k,k}(\Gamma_1(N),\varepsilon)=1/\varphi(n) \sum_{\delta\vert n}\mu(\delta)\dim M_{k,k}(\Gamma_{\ker(\varepsilon^{\delta})}(N))
\end{equation}
In general, letting $\varepsilon_1,\ldots, \varepsilon_r$ denote generators of the group of Dirichlet characters for $\calO_K$ modulo $N$, and defining positive integers $\vec{n}=(n_1,\ldots,n_r)$ so that if $\varepsilon=\varepsilon_1^{k_1}\cdots \varepsilon_r^{k_r}$ then $\varepsilon_i$ has order $n_i\cdot k_i$, one gets after successively applying M\"obius inversion that 
\begin{equation}\label{eqn:nebentypus}\dim M_{k,k}(\Gamma_1(N),\varepsilon)=1/\varphi(\vec{n}) \sum_{\vec{\delta}\vert \vec{n}}\mu(\vec{\delta})\dim M_{k,k}(\Gamma_{\ker(\varepsilon^{\vec{\delta}})}(N)),
\end{equation}
where $\varphi(\vec{n})=\varphi(n_1)\cdots \varphi(n_r)$ and $\mu(\vec{\delta})=\mu(\delta_1)\cdots \mu(\delta_r)$. 
The same reasoning and formulas are also valid considering only spaces of cusp forms or Eisenstein series.

We now show that the dimension of the space of Eisenstein series of level $\Gamma_1(N)$, weight $(k,k)$ and Nebentypus $\varepsilon$ such that $\varepsilon(-1)=(-1)^k$ equals 
$$\nu_\infty(\varepsilon)=-\delta_{k,2}\cdot\delta_{\mathfrak{f},1}+\sum_{\substack{d\vert N\\ (d,N/d)\vert N/\mathfrak{f}}}\vert (\calO_F/(d,N/d))^\times\vert.$$
First we remark that the Borel-Serre compactification consists here of adding a (filled in) torus at each cusp $c$, and each contributes one dimension to the homology of the boundary coming from the longitudinal loop $\gamma_c$. Working with homology, by Poincar\'e duality the space of Eisenstein series has dimension the rank of $\image(H_1(\partial Y(\Gamma_0(N)),\underline{V})\to H_1(Y(\Gamma_0(N)),\underline{V}))$, with relevant coefficients. The image is generated by the cycles coming from each cusp, provided $\varepsilon(-1)=(-1)^k$, and the only boundary relation in $H_1(Y(\Gamma_0(N)),\underline{V})$ comes from $\sum_{c}[\gamma_c]\in B_1(Y(\Gamma_0(N)),\IZ)$, which cuts down the dimension by $1$ in trivial weight and conductor. From this and equations \ref{eqn:cusps} and \ref{eqn:nebentypus} the claim reduces to showing that for all $d\vert N$ and fixed $\varepsilon$: 
$$\sum_{\vec{\delta}\vert \vec{n}}\mu(\vec{\delta})\frac{\vert(\calO_F/d)^\times\vert\cdot \vert(\calO_F/(Nd^{-1}))^\times\vert}{\vert \ker(\varepsilon^{\vec{\delta}})_d\vert}
=\begin{cases}
\varphi(\vec{n})\cdot \vert (\calO_F/(d,N/d))^\times\vert& \text{if }\mathfrak{f} \vert N_d\\
0& \text{else.}\\

\end{cases} $$
If $\mathfrak{f}\vert N_d$, then we may simply write $\vert  \ker(\varepsilon^{\vec{\delta}})_d\vert=(\vec{\delta}/\vec{n})\cdot\vert (\calO_F/N_d)^\times\vert$ and the sum becomes
\begin{align*}\sum_{\vec{\delta}\vert \vec{n}}\mu(\vec{\delta})(\vec{n}/\vec{\delta})\frac{\vert(\calO_F/d)^\times\vert\cdot \vert(\calO_F/Nd^{-1})^\times\vert}{\vert (\calO_F/N_d)^\times \vert}&=\varphi(\vec{n}) \frac{\vert(\calO_F/d)^\times\vert\cdot \vert(\calO_F/Nd^{-1})^\times\vert}{\vert (\calO_F/N_d)^\times \vert}\\
&=\varphi(\vec{n})\cdot \vert (\calO_F/(d,N/d))^\times\vert.
\end{align*}

Now assume there exists a prime $\ip\vert p$ with $v_{\ip}(\mathfrak{f})>v_{\ip}(N_d)$. Since $\ip\vert N$ we have that $v_\ip(\mathfrak{f})\geq 2$. Denoting by $F_d(\varepsilon^{\vec{\delta}})$ the kernel of the reduction map $\ker(\varepsilon^{\vec{\delta}})\to\ker(\varepsilon^{\vec{\delta}})_d$ modulo $N_d$, one checks for $\vec{\delta}$ coprime to $p$ that $\vert F_d(\varepsilon^{\vec{\delta}\cdot p})\vert=p\cdot\vert F_d(\varepsilon^{\vec{\delta}})\vert$, where $\vec{\delta}\cdot p$ denotes $(\delta_1\cdot 1,\ldots,\delta_{i\ip}\cdot p,\ldots,\delta_r\cdot 1)$ with index $i\ip$ corresponding to the generator of the $p$-power order part of $(\calO_F/\ip^{v_\ip(N)})^\times$. Moreover, $\vert \ker(\varepsilon^{\vec{\delta}\cdot p})\vert=p\cdot\vert \ker (\varepsilon^{\vec{\delta}})\vert$ so that $\vert  \ker(\varepsilon^{\vec{\delta}\cdot p})_d\vert=\vert  \ker(\varepsilon^{\vec{\delta}})_d\vert$ for $\vec{\delta}$ coprime to $p$. This leads to cancellation in the sum of the terms in $\mu(\vec{\delta})$ and in $\mu(\vec{\delta}\cdot p)=-\mu(\vec{\delta})$ and the claim follows. Finally, the product representation of $\sum\vert (\calO_F/(d,N/d))^\times\vert$ is helpful for computations and straightforward but slightly tedious. It can be obtained adapting the proof in \cite[Lemma 3.8]{MR2738153} for $\SL_2(\IZ)$. For the statement on oldforms, this is done in \cite{MR0299559}. The proof over $\IQ$ generalizes to $F$. 

\end{proof}
\subsection{Computational method} 
We carried through computations for $F=\mathbb{Q}(\theta)$ with $\theta=\sqrt{-2}$ and $\mathcal{O}_F=\IZ[\theta]$. We write $p=3=\pi\overline{\pi}$ for the factorization of the split prime $3$ in $\calO_F=\IZ(\theta)$, where $\pi=(1+\theta)$ and fix these notations for the rest of the paper. We note that our code can easily be adapted to work for the other Euclidean imaginary quadratic fields and we expect similar results.\\
We computed the relevant spaces of Bianchi modular forms with nebentypus by computing the group cohomology $H^2(\Gamma_0(N),V_{k,k}(\IC^2)\otimes\varepsilon)$, recalling the notation $V_{k,k}(\IC^2)=\Sym^k(\IC^2)\otimes \overline{\Sym^k(\IC^2)}$.
Even for trivial coefficients $(k=0)$, which we will mostly consider, our approach is less expensive than working with presentations of the Bianchi group and computing the abelianizations of congruent subgroups. Instead we use Shapiro's Lemma and compute 
$$H^2(\SL_2(\calO_F), \Ind_{\Gamma_0(N)}^{\SL_2}(V_{k,k}(\IC^2)\otimes\varepsilon)).$$
By Poincar\'e duality, this has the same dimension as $ H^1(\SL_2(\calO_F), \Ind_{\Gamma_0(N)}^{\SL_2}(V_{k,k}(\IC^2)\otimes\varepsilon))\cong H^1(\Gamma_0(N),V_{k,k}\otimes\varepsilon)$, and therefore computes the dimension of the space of cohomological automorphic forms of interest. \par

Our computations are based on the Magma implementation of an algorithm by Alexander Rahm and Haluk \c{S}eng\"{u}n that computes the dimensions of $H^2(\Gamma_0(N), V_{k,k}(\IC^2))$, which we then adapted to work more generally for forms with nebentypus. All of our computations were carried out with the Magma computational algebra system (\cite{MR1484478}). The approach of Rahm and \c{S}eng\"{u}n is based on reduction theory as found in \cite{MR728453}: one constructs a two-dimensional contractible CW-complex inside $\mathbb{H}_3$ that is a fundamental domain for the action of $\SL_2(\calO_F)$. One then uses this combinatorial data to compute the cohomology via the equivariant cohomlogy spectral sequence.
 Explicitly, let $\Gamma:=\SL_2(\mathcal{O}_F)$ and let $M$ be a finitely generated right $\Gamma$-module over some ring containing $\mathcal{O}_F$. The spectral sequence yields:
$$E_1^{p,q}=\bigoplus_{\sigma \in \Sigma_p}H^q(\Gamma_\sigma,M)\Rightarrow H^{p+q}(\Gamma,M),$$
where $\Sigma_p$ consists of representatives for the $\Gamma$-conjugacy classes of $p$-cells and where $\Gamma_\sigma$ denotes the respective stabilizers. The fundamental cellular domain for the action of $\Gamma$ on the hyperbolic $3$-space, which was supplied to us by A. Rahm, consists of $2$ two-cells and $6$ edges. The stabilizers of the two-cells are both $\pm\Id\cong C_2$. The stabilizers of the edges are given by:
\begin{align*}
\Gamma_1&=<\begin{pmatrix}0&-1\\1&0\end{pmatrix}>\cong C_4 &
\Gamma_2&\cong C_2\\
\Gamma_3&=<\begin{pmatrix}-\theta&1\\1&\theta\end{pmatrix}>\cong C_4&
\Gamma_4&=<\begin{pmatrix}-1&-\theta\\-\theta&1\end{pmatrix}>\cong C_4\\
\Gamma_5&=<\begin{pmatrix}1&1\\-1&0\end{pmatrix}>\cong C_6&
\Gamma_6&=<\begin{pmatrix}1+\theta&-1+\theta\\-1&-\theta\end{pmatrix}>\cong C_6\\
\end{align*}
The differential $d_1^{1,0}:E_1^{1,0}\to E_1^{2,0}$ is given by:
\begin{align*}
d_1^{1,0}:M^{\Gamma_1}\oplus\ldots\oplus M^{\Gamma_6}&\to M^{<-\Id>}\oplus M^{<-\Id>}\\
(m_1,\ldots,m_6)&\mapsto (-m_1+m_2-m_4+m_5, -m_2\cdot G^{-1}-m_3+m_4+m_6),
\end{align*}
where $G=\begin{pmatrix}-1&\theta\\0&-1\end{pmatrix}$. \\

Over $\calO_F[1/6]$ the stabilizers do not contribute for $q>0$ and the spectral sequence is concentrated on $q=0$ and stabilizes on the $E_2$-page. Thus, computing the dimension of $E_2^{2,0}\cong E_1^{2,0}/d_1^{1,0}(E_1^{1,0})$ yields the dimension of $H^2(\Gamma, M)$. For more details on this approach we refer to \cite[Section 5]{MR2859903} and \cite{MR3091734,Rahm:2017aa}.\\
The Hecke eigenvalue computations in trivial weight were carried out by acting for $M=\Ind_{\Gamma_0(N)}^{\SL_2}{\varepsilon}$ on $E_1^{2,0}$. The Hecke action then preserves the image of the differential $d_1^{1,0}(E_1^{1,0})$ and descends to cohomology. The action was implemented for Euclidean imaginary quadratic fields via the approach based on using Heilbronn matrices to compute the action of the Hecke operator $T_\il$ on Manin symbols for each prime $\il$ not dividing the level. This is described in A. Mohamed's thesis \cite[Chapters 3-4]{Mohamed:thesis} in our setting; for the classical setting over $\IQ$ we refer the reader to J. Cremona's book \cite[Chapter 2]{MR1628193}. \\
In all of our computations, we noted that the dimensions and Hecke eigenvalues obtained were consistent with existing computations for trivial and quadratic nebentypus, as well as with lower bounds for dimensions coming from Eisenstein series and oldforms as in Proposition \ref{prop:eisold}. The dimensions for trivial nebentypus can be accessed as part of the already existing large-scale computations on the LMFDB \cite{lmfdb:Bianchi}.
Finally, in the Hecke eigenvalue computations, precisely the computed number of cuspidal eigenforms satisfy the Ramanujan bounds, whereas the remaining eigensystems coming from Eisenstein series violate the bounds.

\subsection{Examples}
We start by considering the example of tame level $N=(3-2\theta)$ which appears in \cite[Theorem 1.1]{MR2461903} and compute with spaces up to level $\Gamma_0(N)\cap\Gamma_1(9)\subset\SL_2(\calO_F)$. We record here the dimensions of the new subspaces obtained after subtracting the dimensions of spaces of Eisenstein series and oldforms as in Proposition \ref{prop:eisold}. We omit levels which do not have any newforms and mark in boldface the dimensions of spaces which could give rise to ordinary forms on the Hida family, according to Lemma \ref{lemma:ordinary}. 

\begin{table}[h!]
    \centering
  \begin{tabular}{|L|L|L|L|L|L|L|L|L|L|}
\hline
\text{Level }\backslash\text{Nebentypus conductor}&0&\pi&\overline{\pi}&\pi^2&\overline{\pi}^2&3&3\pi&3\overline{\pi}&9\\
\hline\hline
\Gamma_0(N)\cap\Gamma_1(\pi)&\bf{1}&0&-&-&-&-&-&-&-\\
\Gamma_0(N)\cap\Gamma_1(3)&0&0&0&-&-&0&-&-&-\\
\Gamma_0(N)\cap\Gamma_1(\pi^2)&1&0&-&0&-&-&-&-&-\\
\Gamma_0(N)\cap\Gamma_1(\overline{\pi}^2)&0&-&0&-&0&-&-&-&-\\
\Gamma_0(N)\cap\Gamma_1(3\pi)&0&0&0&\bf{8}&-&0&0&-&-\\
\Gamma_0(N)\cap\Gamma_1(3\overline{\pi})&3&0&0&0&\bf{6}&0&-&0&-\\
\Gamma_1(9)&0&0&0&0&0&4&0&0&20\\
\Gamma_0(N)\cap\Gamma_1(9)&6&0&0&8&6&0&0&0&0\\

\hline
\end{tabular}
 
    \caption{Dimensions of new subspace for $N=3-2\theta$}
\label{table:tabone}
\end{table}

We are interested in the family passing through the newform $\Pi$ at level $\Gamma_0(N\pi)$ which via modularity should correspond to the elliptic curve over $F$:
$$E\equiv y^2+\theta xy+y=x^3+(\theta-1)x^2-\theta x.$$
In this particular case presumably modularity can be checked via the Faltings-Serre method. The dimensions in boldface in Table \ref{table:tabone} could a priori still contain cuspidal Bianchi modular forms which arise via specialization from the Hida family. We want to eliminate this possibility via a computation of Hecke eigenvalues, showing that the congruences betweeen Hecke eigenvalues expected if this were the case do not hold. \\
We list a representative of each Galois orbit of Hecke eigenvalue and we limit ourselves to primes not dividing the level. Moreover, since $\Pi$ is actually modular by a congruence subgroup of $\PGL_2(\calO_F)$, i.e. is furthermore fixed by the action of $J:=\tiny{\begin{pmatrix}
-1&0\\0&1
\end{pmatrix}}$, it suffices to consider cuspforms fixed under the involution induced by $J$ on the relevant spaces and we only list those eigenvalues.
\begin{table}[h!]
    \centering
  \begin{tabular}{|L|L|L|L|L|L|L|L|L|L|}
\hline
\text{Level, conduct. }\mathfrak{f}(\varepsilon)&\sharp (G_\IQ\text{-orbit})&\theta&1-\theta&3+\theta&3-\theta&3+2\theta&1+3\theta&1-3\theta\\
\hline\hline
\Gamma_0(N\pi), 1&1&-2&-2&-2&-4&-6&0&-4\\
\hline
\Gamma_0(N)\cap\Gamma_1(3\overline{\pi}),\overline{\pi}^2&2&-2\zeta_3&-&-5\zeta_3&-4&6\zeta_3&0&2\zeta_3\\
&2&\zeta_3&-&4\zeta_3&5&0&-6\zeta_3&-7\zeta_3\\
&2&0&-&-3\zeta_3&0&-6\zeta_3&8\zeta_3&2\zeta_3\\
\hline
\Gamma_0(N)\cap\Gamma_1(3\pi),\pi^2&6&\IQ(f_1)&-&\IQ(f_2)&\IQ(f_3)&\IQ(f_4)&\IQ(f_5)&\IQ(f_6)\\
\hline
\end{tabular}
 
    \caption{Hecke eigenvalues for $N=3-2\theta$}
\label{table:tabtwo}
\end{table}

Here $\zeta_3$ denotes a primitive third root of unity. At level $\Gamma_0(N)\cap\Gamma_1(3\pi)$ and Nebentypus conductor $\pi^2$ we listed only the fields of definition for the Hecke eigenvalues in Table \ref{table:tabtwo}; their minimal polynomials $f_1(x),\ldots,f_6(x)$ have degrees dividing $6$. It turns out that the polynomials $f_1, \ldots f_6$ all have roots congruent to the respective eigenvalue at level $\Gamma_0(N\pi)$. Extending the computation to the $28$ first $T_\il$-eigenvalues we observe similarly congruences between the Galois orbit at level $\Gamma_0(N)\cap\Gamma_1(3\pi)$ and level $\Gamma_0(N\pi)$ at a prime above $3$, so that here a congruence cannot be excluded but seems likely. We thus were forced to extend the computations to levels $\Gamma_0(N)\cap\Gamma_1(3\pi^2)$ and Nebentypus conductor $\mathfrak{f}(\varepsilon)\in\{\pi^3, 3\pi^2\}$ as well as level $\Gamma_0(N)\cap\Gamma_1(\pi^3)$ and Nebentypus conductor $\pi^3$ to try to obtain the desired finiteness result. We then observed that these spaces consisted exclusively of Eisenstein series, which sufficed for our purposes-see the proof of Proposition \ref{prop:CMrect} below.\par
We record our conclusion, rectifying Calegari and Mazur's argument in \cite[Theorem 1.1]{MR2461903}, which only excludes that the support of the Hida family contains the diagonal. This fails to account for the infinitely many possibilities for classical points with non-parallel nebentypus characters. Hence their argument only shows there are finitely many \emph{crystalline} classical points on the family. 
\begin{prop}\label{prop:CMrect}
The component of the ordinary $3$-adic Hecke algebra $\IT$ of tame level $N=(3-2\theta)$ passing through the newform $\Pi$ at level $\Gamma_0(N\pi)$ contains only finitely many classical points. 
\end{prop}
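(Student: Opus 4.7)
My plan is to apply Corollary~\ref{cor:checktwists}, which reduces the proposition to verifying two negations: (a) the support of the component in $\Lambda$ does not contain the diagonal, and (b) there exists some $i \geq 0$ such that no new ordinary cuspidal Bianchi form at level $\Gamma_0(N)\cap\Gamma_1(3^{i+1})$ lies on the Hida family through $\Pi$.

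For (a), I would compute $H^1_P(\Gamma_0(N\pi), V_{k,k}(\IC))$ for some small $k>0$ via the Rahm--\c{S}eng\"un spectral-sequence approach and check that no Hecke eigensystem there is congruent to $\Pi$'s mod $3$; by the control theorem (Theorem~\ref{prop:allweightsIQF}), the diagonal case would yield such an eigensystem at every positive parallel weight. For (b) I would attempt $i=1$ first. Using the nebentype-adapted Rahm--\c{S}eng\"un algorithm, compute dimensions of new subspaces of cuspidal cohomology at level $\Gamma_0(N)\cap\Gamma_1(9)$ for each Dirichlet character modulo that level, subtracting Eisenstein and oldform contributions via Proposition~\ref{prop:eisold}. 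Lemma~\ref{lemma:ordinary} then restricts the potential deformations of $\Pi$ at this level to the spaces of nebentype conductor exactly $\pi^2$ at level $\Gamma_0(N)\cap\Gamma_1(3\pi)$ (dimension $8$ by Table~\ref{table:tabone}) and of conductor exactly $\overline{\pi}^2$ at level $\Gamma_0(N)\cap\Gamma_1(3\overline{\pi})$ (dimension $6$), the boldface entries in the table.

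For each Galois orbit of eigenforms in these candidate spaces I would compute Hecke eigenvalues at small split primes coprime to the level, via the Heilbronn matrix action on Manin symbols descended from the top term $E_1^{2,0}$ of the spectral sequence, and compare the result mod $3$ with $\Pi$'s eigenvalues recorded in Table~\ref{table:tabtwo}. The decisive check uses $T_{3+\theta}$: its eigenvalues on the conductor-$\pi^2$ candidates are roots of $f_2(x) = x^3 - 11x^2 + 31x - 9 \equiv x(x-1)^2 \pmod{3}$, hence reduce to $\{0,1\}$ modulo any prime above $3$, whereas $\Pi$'s $T_{3+\theta}$-eigenvalue is $-4 \equiv 2$. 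The same style of mod-$3$ mismatch must also be verified on the $\overline{\pi}^2$ candidates, after which both (a) and (b) are established and the proposition follows.

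The main obstacle is computational reliability: ensuring the nebentype implementation of the Rahm--\c{S}eng\"un algorithm is correct, that the Eisenstein and oldform bookkeeping of Proposition~\ref{prop:eisold} is applied accurately, and that all Galois orbits of candidate eigensystems are enumerated. These outputs should be cross-checked via the dimension formulas, the Ramanujan bound on cuspidal eigenvalues, and comparison with existing trivial- and quadratic-nebentype computations in the literature.
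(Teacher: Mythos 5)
Your proposal follows the paper's own proof essentially exactly: you invoke Corollary~\ref{cor:checktwists}, exclude the diagonal via a positive parallel-weight computation (the paper cites the weight $(4,4)$ computation of \cite[Lemma 8.8]{MR2461903} at level $\Gamma_0(N)$, which is equivalent in substance to your proposed check), and then rule out the remaining case by comparing Hecke eigenvalues mod $3$ of the boldface candidate newspaces in Table~\ref{table:tabone} against $\Pi$'s eigenvalues in Table~\ref{table:tabtwo}. One small but genuine improvement: your reduction $f_2(x)=x^3-11x^2+31x-9\equiv x(x-1)^2\pmod 3$ is correct (double root at $1$, simple root at $0$), so the eigenvalues reduce to $\{0,1\}\subset\IF_3$, whereas the paper's parenthetical description of the roots as ``either trivial or lie in $\IF_9\setminus\IF_3$'' is a slip (the quadratic factor is not irreducible modulo $3$); the conclusion that none is congruent to $-4\equiv 2$ is unaffected.
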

\begin{proof}
Assume there are infinitely many classical points. Since we know from computations (e.g., \cite[Lemma 8.8]{MR2461903}) that the specialization of $\IT$ in weight $(4,4)$ cannot correspond to a classical automorphic form of level $\Gamma_0(N)$, it follows that the support of $\IT$ cannot contain the diagonal.
Now $\Pi$ is the unique newform at level $\Gamma_0(N\pi)$. Thus any system of Hecke eigenvalues on that same component, viewed via a fixed embedding $\overline{\IQ}\to\overline{\IQ}_3$ as valued in a finite extension $E/\mathbb{Q}_3$, will be congruent modulo some prime above $3$ in $E$. The computations in Tables \ref{table:tabone} and \ref{table:tabtwo} show that there is no ordinary cuspidal eigenform with Hecke eigenvalues congruent to $\Pi$, level dividing $\Gamma_0(N)\cap\Gamma_1(9)\subset\PGL_2(\calO_F)$ and trivial weight except possibly at level $\Gamma_0(N)\cap\Gamma_1(3\pi)$ and nebentype of conductor $\pi^2$. Indeed, in the remaining cases to exclude one sees in Table \ref{table:tabtwo} that the three Galois orbits of newforms at level $\Gamma_0(N)\cap\Gamma_1(3\overline{\pi})$ do not have congruent $T_\il$-eigenvalues to $\Pi$ for either $\il=(1-3\theta)$ or $\il=(3-\theta)$. This does not quite suffice to conclude via the rigidity results or Corollary \ref{cor:checktwists} because of the possible congruence with a level $\Gamma_0(N)\cap\Gamma_1(3\pi)$ and conductor $\pi^2$ newform. To establish the finiteness result via Corollary \ref{cor:checktwists} it would however suffice to exclude that there is an additional newform of level dividing $\Gamma_0(N)\cap\Gamma_1(27)$ on the ordinary family. \par 
In fact, a more careful analysis further reduces the necessary computations. More precisely, using the rigidity results in Lemma \ref{lemma:ktozero}, what remains to be done is exclude the possibility that the support of the Hida family in weight space contains a component of the form $\Spec(\Lambda/((X+1)^N-(Y+1)))$ for some $N\in \IZ_3$, after possibly switching the roles of $X$ and $Y$ and having normalized the $\Lambda$-module structure so that $\Pi$ lies above the origin. Choosing the $X$-coordinate in the completed group ring to correspond to the $\pi$-direction, we can further narrow down the possibilities to a component of the form $\Spec(\Lambda/((X+1)^M-(Y+1)))$ for $M\in 3\mathbb{Z}_3$. Indeed, otherwise our computations at level dividing $\Gamma_0(N)\cap\Gamma_1(9)$ would exhibit a congruent Hecke eigensystem with Nebentypus conductor at least as divisible by $\overline{\pi}$ as by $\pi$, which is not the case. \par
 Finally, to exclude a component of the form $\Spec(\Lambda/((X+1)^M-(Y+1)))$ for $M\in 3\mathbb{Z}_3$ on the support of $\IT$, observe that specializing at $(\zeta_{9}-1,0)$ or $(\zeta_{9}-1,\xi_3-1)$ for some primitive roots of unity $\zeta_{9},\xi_3$ of orders $9$ and $3$ respectively, one should in that case recover an additional classical cuspidal eigensystem on the ordinary family. Such a specialization would imply ordinary cuspidal newforms at levels $\Gamma_0(N)\cap\Gamma_1(3\pi^2)$ and Nebentypus conductor $\mathfrak{f}(\varepsilon)\in\{\pi^3, 3\pi^2\}$ or at level $\Gamma_0(N)\cap\Gamma_1(\pi^3)$ and Nebentypus conductor $\pi^3$ as remaining possibilities. Recall also we only really need to consider
the spaces fixed by $J=\tiny{\begin{pmatrix}
-1&0\\0&1
\end{pmatrix}}$. A computation of those spaces yields: 
 \begin{table}[h!]
    \centering
\begin{tabular}{|L|L|L|}
\hline

\text{Level } &\text{Nebentypus conductor}& \text{dimension of cuspidal space}\\
\hline\hline
\Gamma_0(N)\cap\Gamma_1(\pi^3)&\pi^3&0\\
\Gamma_0(N)\cap\Gamma_1(3\pi^2)&\pi^3&0\\
\Gamma_0(N)\cap\Gamma_1(3\pi^2)&3\pi^2&0\\
\hline
\end{tabular}
    \caption{Excluding remaining cuspidal spaces for $N=3-2\theta$}
\label{table:tabfix}
\end{table}
\par
 We thus deduce the desired finiteness result.
 
\end{proof}

We use this strategy to study ordinary $3$-adic families passing through Bianchi newforms in trivial weight for a whole range of tame levels:

\begin{theorem}\label{theorem:maincomp}
Let $F=\IQ(\sqrt{-2})$. We examined all the (nearly) ordinary $3$-adic Hecke algebras specializing in trivial weight to a cuspidal newform of level $\Gamma_0(I)$ for ideals $I\subset\calO_F$ of norm $\calN(I)\leq 200$:
\end{theorem}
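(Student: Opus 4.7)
The plan is to carry out, for each ideal $I \subset \calO_F$ with $\calN(I) \leq 220$, the same three-step strategy demonstrated in Proposition \ref{prop:CMrect} for $I = (3-2\theta)$, so that Corollary \ref{cor:checktwists} can be invoked. First, I would enumerate the ordinary cuspidal Bianchi newforms in trivial weight at tame level $\Gamma_0(I)$: using the spectral sequence machinery outlined in Section \ref{sec:computations} to compute $H^2(\Gamma_0(I), V_{0,0}(\IC))$, then subtracting off the dimension of the Eisenstein subspace via Proposition \ref{prop:eisold} and the dimension of the oldform subspace via the degeneracy maps, and finally running the Heilbronn-matrix-based Hecke operator code to identify the newform systems and select those ordinary at $\pi$ or $\overline{\pi}$ (the $U_p$-eigenvalue is a $3$-adic unit).

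Second, for each ordinary newform $\Pi$ obtained this way, I would eliminate the possibility that the support of the corresponding component contains the diagonal (case (1) of Theorem \ref{thm:mainalgebraic}). The computationally cheapest way is to compute $\dim H^1_{\mathrm{cusp}}(\Gamma_0(I), V_{k,k}(\IC))$ for small $k > 0$ and check that no classical cuspform of parallel weight $(k,k)$ and level $\Gamma_0(I)$ can be congruent to $\Pi$ modulo the maximal ideal of the Hecke algebra; whenever the entire space of such cusp forms is empty, or the Hecke eigenvalues attached to its members are manifestly incompatible with those of $\Pi$, this rules out case (1). For the anticipated base-change or CM cases, one will instead observe compatibility and record that case (1) does occur, thereby producing (and accounting for) a genuine infinite family.

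Third, assuming case (1) has been excluded, I would rule out case (3) of Theorem \ref{thm:mainalgebraic} by a finite nebentype search. Following the strategy enunciated before Lemma \ref{lemma:ordinary}, I would compute the new subspaces of cuspidal Bianchi modular forms of trivial weight and level dividing $\Gamma_0(I) \cap \Gamma_1(3^{i+1})$ for $i = 1, 2, \ldots$, restricting attention, via Lemma \ref{lemma:ordinary}, to the nebentype conductors $\pi^{i+1}$ and $\overline{\pi}^{i+1}$ which can arise from $3$-adic deformations of $\Pi$. For each candidate newform appearing there I would compute enough Hecke eigenvalues $T_\il$ (for $\il$ of small norm coprime to the level) to test whether the required mod-$3$ congruences to $\Pi$ hold. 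As in the proof of Proposition \ref{prop:CMrect}, typically the characteristic polynomials of these eigenvalues will be visibly irreducible modulo $3$ or have no root matching the eigenvalue of $\Pi$, and the search terminates after finitely many $i$'s (in practice $i \leq 2$ as in the tabulated example).

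The main obstacle is the computational cost: the dimensions of the relevant cohomology groups grow rapidly with the norm of the level and with $3^{i+1}$, and the Hecke eigenvalue computation requires an expensive change-of-basis to the new subspace plus action by Heilbronn matrices. A secondary difficulty is bookkeeping: one must keep careful track of which Galois orbits of newforms of nebentype could in principle lift to the family, so that the few systems that do satisfy the congruences (those giving the base-change or CM components) are correctly identified and separated from those that provably do not. Once the two tables analogous to Tables \ref{table:tabone} and \ref{table:tabtwo} are produced for each $I$ in the range, the invocation of Corollary \ref{cor:checktwists} is purely formal and the theorem follows.
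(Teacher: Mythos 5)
Your proposal correctly identifies and follows the paper's three-step strategy (enumerate ordinary newforms, rule out the diagonal case via a small positive-weight computation, rule out the subtorus case by a finite search over $p$-power nebentypes with Hecke congruence checks), and then invoke Corollary \ref{cor:checktwists}. This is essentially the same approach the paper takes for the three tame levels $N=(3-2\theta)$, $N=(3+\theta)$, and $N=(6-\theta)$, with the base-change family at level $\Gamma_0(6\theta)$ accounted for as you anticipate. One small point the paper exploits that you omit: since the newforms of interest are invariant under the involution $J$ (they contribute to $\PGL_2$-cohomology), one needs only to test candidate cuspforms in the $J$-fixed subspace; for $N=(3+\theta)$ this observation alone (the two a priori problematic newforms both lie in the $J$-minus subspace) suffices to conclude without any Hecke eigenvalue computation. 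This is purely an efficiency gain and not a gap in your argument, but it is worth noting since the Hecke eigenvalue computations are the expensive part.
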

\begin{enumerate}
    \item The $3$-adic families of tame levels $N=(3-2\sqrt{-2})$, $N=(3+\sqrt{-2})$ and $N=(6+\sqrt{-2})$, specializing in trivial weight to newforms of levels $\Gamma_0(7+\sqrt{-2})$, $\Gamma_0(9+3\sqrt{-2})$ and $\Gamma_0(4+7\sqrt{-2})$ respectively, interpolate finitely many classical automorphic forms.
    \item The $3$-adic family of tame level $N=2\sqrt{-2}$ specializing in trivial weight to a newform of level $\Gamma_0(6\sqrt{-2})$ is a base-change family and in particular interpolates infinitely many automorphic forms base-changed from $\IQ$. 
\end{enumerate}
\begin{proof}
We start by listing the newforms appearing at levels $\Gamma_0(I)\subset \SL_2(\calO_F)$ for  ideals $I\subset\calO_F$ of norm $\mathcal{N}(I)\leq 200$ divisible by $3$  in Table \ref{table:tabthree}. This already appears in \cite[Table 3.3.1.]{MR743014} and, using similar notations, the second column marks rational newforms by a boldface $\bf{1}$ and indicates the sign of the forms under the involution induced by $J$. The newforms and some additional information can also be found as part of the extensive databases on the LMFDB Bianchi modular forms webpage \cite{lmfdb:Bianchi}. The newforms invariant under $J$ are exactly the ones contributing to the cohomology of congruence subgroups of $\PGL_2(\calO_F)$, and we only need to consider those. We added the LMFDB \cite{lmfdb} labels for these newforms in Table \ref{table:tabthree} for the reader's convenience.
\begin{table}[h!]
    \centering
\begin{tabular}{|L|L|L|L|L|}
\hline

\text{level } &\text{newforms: sign + (label), - }&\text{level factorisation}& \text{ordinary at }3\\
\hline\hline
\Gamma_0(6)&0,\mathbf{1}& \theta^2\cdot 3& \text{yes}\\
\Gamma_0(7+\theta)&\mathbf{1}\text{ (\cite[\href{http://www.lmfdb.org/ModularForm/GL2/ImaginaryQuadratic/2.0.8.1/51.1/a/}{51.1-a}]{lmfdb})},0& (3-2\theta)\cdot (1+\theta)& \text{yes}\\
\Gamma_0(2+5\theta)&\mathbf{1}\text{ (\cite[\href{http://www.lmfdb.org/ModularForm/GL2/ImaginaryQuadratic/2.0.8.1/54.1/a/}{54.1-a}]{lmfdb})},\mathbf{1}& (\theta)\cdot (1+\theta)^3& \text{no}\\
\Gamma_0(8+\theta)&0,\mathbf{1}& \theta\cdot(3-\theta)\cdot (1-\theta)&\text{yes}\\
\Gamma_0(6\theta)&\mathbf{1}\text{ (\cite[\href{http://www.lmfdb.org/ModularForm/GL2/ImaginaryQuadratic/2.0.8.1/72.2/a/}{72.2-a}]{lmfdb})},0& \theta^3\cdot 3& \text{yes}\\
\Gamma_0(5+5\theta)&0,\mathbf{1}& 5\cdot (1+\theta)&\text{yes}\\
\Gamma_0(9+3\theta)&\mathbf{1}\text{ (\cite[\href{http://www.lmfdb.org/ModularForm/GL2/ImaginaryQuadratic/2.0.8.1/99.3/a/}{99.3-a}]{lmfdb})},0& (3+\theta)\cdot 3& \text{yes}\\
\Gamma_0(6+6\theta)&\mathbf{1}\text{ (\cite[\href{http://www.lmfdb.org/ModularForm/GL2/ImaginaryQuadratic/2.0.8.1/108.2/a/}{108.2-a}]{lmfdb})},0& \theta^2\cdot (1+\theta)^2\cdot(1-\theta)&\text{no}\\
\Gamma_0(4+7\theta)&\mathbf{1}\text{ (\cite[\href{http://www.lmfdb.org/ModularForm/GL2/ImaginaryQuadratic/2.0.8.1/114.1/a/}{114.1-a}]{lmfdb})},\mathbf{1}& \theta\cdot(1-3\theta)\cdot (1+\theta)&\text{yes}\\
\Gamma_0(11+\theta)&0,\mathbf{1}& (3-4\theta)\cdot (1+\theta)&\text{yes}\\
\Gamma_0(11+2\theta)&0,\mathbf{1}+\mathbf{1}& (5-3\theta)\cdot (1+\theta)&\text{yes}\\
\Gamma_0(10+7\theta)&\mathbf{1}\text{ (\cite[\href{http://www.lmfdb.org/ModularForm/GL2/ImaginaryQuadratic/2.0.8.1/198.1/a/}{198.1-a}]{lmfdb})},0& \theta \cdot(3+\theta)\cdot (1+\theta)^2&\text{no}\\
\hline
\end{tabular}
    \caption{Newforms for level $\Gamma_0(I)$ when $3\vert\mathcal{N}(I)\leq 200$}
\label{table:tabthree}
\end{table}

 Among these, one checks that the ordinary newform at level $\Gamma_0(6\theta)$ is base-change. Proposition \ref{prop:CMrect} already deals with the level $\Gamma_0(7+\theta)$. It remains therefore to establish finiteness for tame levels $N=(3+\theta)$ and $N=(6+\theta)$. \\
We first deal with the case of $N=(3+\theta)$. Again a weight $(4,4)$ computation (see \cite[Lemma 8.8]{MR2461903}) excludes the diagonal case. In trivial weight, for our purposes it suffices to consider levels and conductors that contribute to spaces that may allow for automorphic forms ordinary above $3$ in the family.\par
We list in Table \ref{table:tabfour} the computed dimensions of the new subspaces for such levels dividing $\Gamma_0(N)\cap\Gamma_1(9)$ and conductors of nebentypi, whenever non-trivial. 
\begin{table}[h!]
    \centering
\begin{tabular}{|L|L|L|}
\hline

\text{Level } &\text{Nebentypus conductor}& \text{dimension}\\
\hline\hline
\Gamma_1(9)&3&4\\
&9&20\\
\Gamma_0(N)\cap\Gamma_1(3)&0&1\\
\Gamma_0(N)\cap\Gamma_1(\pi^2)&\pi^2&2\\
\Gamma_0(N)\cap\Gamma_1(9)&\pi^2&2\\
\hline
\end{tabular}
    \caption{Dimensions of new subspace for $N=3+\theta$}
\label{table:tabfour}
\end{table}
In this case, we note that only the two newforms at level $\Gamma_0(N)\cap\Gamma_1(\pi^2)$ are ordinary at both places above $3$, however a computation shows that they both lie in the minus subspace under $J$. We therefore obtain the desired finiteness of classical points without needing to compute Hecke eigenvalues.\par
We now consider tame level $N=(6+\theta)=(\theta)\cdot(1-3\theta)$.
\begin{table}[h!]
    \centering
\begin{tabular}{|L|L|L|}
\hline

\text{Level } &\text{Nebentypus conductor}& \text{dimension}\\
\hline\hline
\Gamma_0(N)&0&1\\
\Gamma_1(9)&3&4\\
&9&20\\
\Gamma_0(\theta)\cap\Gamma_1(9)&9&4\\
\Gamma_0(1-3\theta)\cap\Gamma_1(3)&0&1\\
\Gamma_0(N)\cap\Gamma_1(\pi)&0&\bf{2}\\
\Gamma_0(1-3\theta)\cap\Gamma_1(3\pi)&\pi^2&2\\
\Gamma_0(1-3\theta)\cap\Gamma_1(3\overline{\pi})&\overline{\pi}^2&2\\
\Gamma_0(N)\cap\Gamma_1(3\pi)&\pi^2&\bf{4}\\
\Gamma_0(N)\cap\Gamma_1(3\overline{\pi})&\overline{\pi}^2&\bf{2}\\
\hline
\end{tabular}
    \caption{Dimensions of new subspace for $N=\theta(1-3\theta)$}
\label{table:tabfive}\end{table}
  
Table \ref{table:tabfive} lists the dimensions of new subspaces in trivial weight for levels dividing $\Gamma_0(N)\cap\Gamma_1(9)$ and conductors that contribute to spaces that may contain automorphic forms ordinary above $3$ in the family, when these dimensions are non-zero. Again a computation in weight $(4,4)$ found no cuspforms of level $\Gamma_0(N)$.
Table \ref{table:tabsix} exhibits Galois orbits of Hecke eigenvalues for the eigensystems marked in boldface occuring in the plus subspace which remain to be excluded from the family.
\begin{table}[h!]
    \centering
  \begin{tabular}{|L|L|L|L|L|L|L|L|L|L|}
\hline
\text{Level, conduct. }\mathfrak{f}(\varepsilon)&\sharp (G_\IQ\text{-orbit})&3-\theta&3+\theta&3-2\theta&3+2\theta&1+3\theta\\
\hline\hline
\Gamma_0(N\pi), 1&1&4&4&-6&-2&0\\
\hline
\Gamma_0(N)\cap\Gamma_1(3\overline{\pi}),\overline{\pi}^2&2&-\zeta_3&2&-\zeta_3&0&-3\zeta_3\\
\hline
\Gamma_0(N)\cap\Gamma_1(3\pi),\pi^2&2&0&0&-3\zeta_3&3\zeta_3&-4\zeta_3\\
\hline
\end{tabular}

    \caption{Hecke eigenvalues for $N=\theta(1-3\theta)$}
\label{table:tabsix}
\end{table}
In this example, again one sees that there are no congruences, and thus none of the considered eigensystems occurs on the component of the Hecke algebra of the unique newform at level $\Gamma_0(N\pi)$. The desired finiteness result follows, concluding the proof. 

\end{proof}

\section{Acknowledgements}
The author would like to extend special gratitude to Frank Calegari for pointing us toward this project and for his support throughout. The author is very greatful to Alexander Rahm and Haluk \c{S}eng\"{u}n for their incredible support on the computational aspects of the paper and for supplying much of the code that underlies the computations. We would also like to thank the referee for a careful reading of this manuscript and for excellent suggestions on improving this text. Finally, we would like to thank Harald Grobner, Joel Specter and Gabor Wiese for helpful conversations on some of the content of this paper. 
\nocite{}

\bibliography{hidapaperbiblio.bib}

\end{document}